\documentclass[a4paper,reqno]{amsart}
\usepackage[english]{babel}
\usepackage{amssymb, amsmath, amsthm,mathrsfs} 
\usepackage{bbm,amsmath,amsthm,epsfig,latexsym,marvosym, esint}
\usepackage{amsfonts}
\usepackage{bigints}
\usepackage{amsmath}
\usepackage{bbm}
\usepackage{amssymb}
\usepackage{enumerate}
\usepackage{ esint }
\usepackage{hyperref}
\usepackage{color}
\usepackage[font={small,up}]{caption}

\numberwithin{equation}{section} 

\usepackage{color}
\usepackage{hyperref}
\definecolor{ddorange}{rgb}{1,0.5,0}
\definecolor{ddcyan}{rgb}{0,0.2,1.0}

\def\Xint#1{\mathchoice
{\XXint\displaystyle\textstyle{#1}}%
{\XXint\textstyle\scriptstyle{#1}}%
{\XXint\scriptstyle\scriptscriptstyle{#1}}%
{\XXint\scriptscriptstyle\scriptscriptstyle{#1}}%
\!\int}
\def\XXint#1#2#3{{\setbox0=\hbox{$#1{#2#3}{\int}$}
\vcenter{\hbox{$#2#3$}}\kern-.5\wd0}}

\def\dashint{\Xint-}

\usepackage{amsthm}
\makeatletter
\def\th@plain{%
  \thm@notefont{}
  \itshape 
}
\def\th@definition{%
  \thm@notefont{}
  \normalfont 
}
\makeatother
\newtheorem{thm}{Theorem}[section]
\newtheorem{prop}[thm]{Proposition}
\newtheorem{lem}[thm]{Lemma}

\theoremstyle{definition}
\newtheorem{defn}[thm]{Definition}

\newtheorem{oss}[thm]{Remark}

\newcommand{\N}{\mathbb{N}}
\newcommand{\R}{\mathbb{R}}

\renewcommand{\epsilon}{\varepsilon}


\newcommand{\sm}{\setminus}

\newcommand{\hd}{\mathcal{H}^{d-1}}
\newcommand{\Rd}{\mathbb{R}^d}

\newcommand{\Mdd}{{\R^{d\times d}_{sym}}}

\newcommand{\e}{\varepsilon}

\title[Non-local approximation of the Griffith functional]{Non-local approximation of the Griffith \\ functional}
\author[Giovanni Scilla]{Giovanni Scilla}
\address[Giovanni Scilla]{Dipartimento di Matematica ed Applicazioni ``R. Caccioppoli'', Universit\`{a} di Napoli Federico~II, Via Cintia Monte Sant'Angelo, 80126 Napoli, Italy}
\email[Giovanni Scilla]{giovanni.scilla@unina.it}
\author[Francesco Solombrino]{Francesco Solombrino}
\address[Francesco Solombrino]{Dipartimento di Matematica ed Applicazioni ``R. Caccioppoli'', Universit\`{a} di Napoli Federico~II, Via Cintia Monte Sant'Angelo, 80126 Napoli, Italy}
\email[Francesco Solombrino]{francesco.solombrino@unina.it}

\begin{document}
\begin{abstract} An approximation, in the sense of $\Gamma$\hbox{-}convergence and in any dimension $d\geq1$, of Griffith-type functionals, with $p-$growth ($p>1$) in the symmetrized gradient, is provided by means of a sequence of non-local integral functionals depending on the average of the symmetrized gradients on small balls. 
\end{abstract}
\keywords{non-local approximations, $\Gamma$\hbox{-}convergence, Griffith functional, brittle fracture}
\subjclass{49Q20; 49J45; 74R10}

\maketitle

\setcounter{tocdepth}{1}  
\tableofcontents


\bigskip
\bigskip

\section{Introduction}

In this paper we provide a variational approximation by means of {\it non-local integral energies} of functionals of the form
\begin{equation}
\alpha\int_{\Omega\backslash K} W(\mathcal{E}u(x))\,\mathrm{d}x  + 2\beta \mathcal{H}^{d-1}(K), 
\label{Griff1}
\end{equation}
where $\Omega$ is a bounded {open} subset of $\R^d$, $K\subseteq\Omega$ is closed, $W$ is a non-negative and convex function with $p-$growth for some $p>1$, $u\in C^1(\Omega\backslash K;\R^d)$, $\mathcal{E}u$ denotes the symmetric part of the gradient of $u$ and $\mathcal{H}^{d-1}$ is the $(d-1)$-dimensional Hausdorff measure. 

Functionals as in \eqref{Griff1} are the core of many variational models of fracture mechanics, in the framework of Griffith's theory of brittle fracture under the small strain assumption (see, e.g., \cite{Griffith, FM} and the references in \cite[Introduction]{CC}). If, as usual, the set $\Omega$ denotes the reference configuration and $u$ represents the displacement field of the body, then the total energy \eqref{Griff1} is the sum of a bulk energy in $\Omega\backslash K$, where the material is supposed to have an elastic/elasto-plastic behavior (see, e.g., \cite[Section 2]{FM}, \cite[Sections 10 and 11]{Hu}), and a surface term accounting for the energy necessary to produce the fracture, proportional to the area of the crack surface $K$. It is only very recently that a rigorous weak formulation of the problem \eqref{Griff1} has been provided \cite{DM2013,CC18Comp}. Within this setting, $u$ is a (vector-valued) generalized special function of bounded deformation, for which the symmetrized gradient $\mathcal{E}u$ is defined almost everywhere in an approximate sense (see~\cite{DM2013}), and the set $K$ is replaced by the $(d-1)$-rectifiable set $J_u$, the jump set of $u$. This space is denoted by $GSBD^p(\Omega)$, where the exponent $p$ refers to the integrability of $\mathcal{E}u$. After that the existence of weak minimizers has been achieved, one can actually show that the jump set thereof is closed (up to a  $\mathcal{H}^{d-1}$-negligible set), and prove well-posedness of the minimization problem for \eqref{Griff1} (see \cite{iur1, iur2, vito}).

However, the minimization of functionals of the type \eqref{Griff1} may be a hard task in practice, mainly due to the presence of the surface term $\mathcal{H}^{d-1}(J_u)$. Such difficulties already appear for $W(M)=|M|^2$ and in the case of antiplane shear (see, e.g., \cite{BFM}), where the energy \eqref{Griff1} reduces to the
Mumford-Shah-type functional
\begin{equation}
\int_\Omega |\nabla u|^2\, \mathrm{d}x + \mathcal{H}^{d-1}(J_u), 
\label{MS}
\end{equation}
for a scalar-valued displacement $u\in SBV(\Omega)$, the space of special functions of bounded variation. It is indeed well-known that a variational approximation of \eqref{MS} by means of local integral functionals of the form
\begin{equation*}
\int_\Omega f_\varepsilon(\nabla u)\,\mathrm{d}x\,,
\end{equation*}
defined on Sobolev spaces, is doomed to failure (see \cite[Introduction]{BraDal97}).
Over the last years,  this has motivated a great effort to provide suitable approximations of \eqref{MS} by means of more manageable functionals, leading to the convergence of minimum points. A number of different approaches has been proposed, which in some cases have also been generalised to the (more challenging) setting of \eqref{Griff1}, both in a discrete and in  a continuous, infinite-dimensional, setting.

A very popular approach, originally proposed by Ambrosio and Tortorelli ~\cite{AT1,AT2} and generalised to the linearly elastic setting in \cite{Foc-Iur, CC}, provides an approximation of \eqref{Griff1} and \eqref{MS} by elliptic functionals (with parameter $\epsilon>0$) at the price of adding an auxiliary {\it phase-field} variable $v \in H^1 (\R^d; [0,1])$. The heuristics behind these functionals, taking the form
\[
\int_{\Omega} v_\e(x)\,W(\mathcal{E}u_\e(x))\,\mathrm{d}x  + \frac{1}\e \int_\Omega (v_\e(x)-1)^2\,\mathrm{d}x+ \e \int_\Omega |\nabla v_\e(x)|^2\,\mathrm{d}x
\]
is to approximate the discontinuity set $K$ with the $\e$-layer $\{v_\e \sim 0\}$. Also discretizations of the above functionals by means of either finite-difference or finite-elements with mesh-size $\delta$, independent of $\epsilon$, have been considered. For a suitable fine mesh, with size $\delta=\delta(\epsilon)$ small enough, these numerical approximations $\Gamma$-converge, as $\epsilon\to0$, to the Mumford-Shah functional (see \cite{BBZ, BelCos94}, and \cite{BCR} for the case of a stochastic lattice). A similar result for the energy \eqref{Griff1} has been recently provided in \cite{CSS}. For other discrete approaches  based on finite differences or finite elements we  may mention \cite{Chambolle1, ChambDM, Strekalovskiy2014127}, in the context of the Mumford-Shah functional, and \cite{Neg03, AFG} for the Griffith model.

Closer to the purpose of our paper are, however,  variational approximations by means of {\it nonlocal} integral energies. Following a conjecture by De Giorgi, Gobbino proved for instance in \cite{Gob98} that the functionals
\[
\frac{1}{\e^{d+1}}\int_{\R^d\times \R^d}\arctan\left(\frac{|u(x)-u(y)|^2}{|y-x|}\right)\mathrm{e}^{-\frac{|y-x|^2}{\e^2}}\,\mathrm{d}x\,\mathrm{d}y
\] 
$\Gamma$-converge to \eqref{MS} when $\e \to 0$. A discretization of this model on graphs has been recently analysed in \cite{Caroccia_2020}, and adaptions to the stochastic setting have also been provided (\cite{Ruf}).
Another method, introduced in \cite{BraDal97}, is based on non-local integral functionals whose density depends on the average of the gradient on small balls, {in order to prevent large gradients being concentrated in small regions.} There, functionals of the form
\begin{equation}
F_\epsilon(u):=\int_\Omega f\left(\epsilon\dashint_{B_\epsilon(x)\cap\Omega} |\nabla u(y)|^2\,\mathrm{d}y\right)\,\mathrm{d}x
\label{eq:bradal97}
\end{equation}
are considered, where $f:[0,+\infty)\to[0,+\infty)$ is an increasing function such that
\begin{equation}\label{eq: f-intro}
\lim_{t\to0^+}\frac{f(t)}{t}=\alpha\,,\quad \lim_{t\to+\infty} f(t)=\beta\,,
\end{equation}
$B_\epsilon(x)$ denotes the {open} ball of radius $\epsilon$ centred at $x\in\Omega$ and $\dashint_B v\,\mathrm{d}x$ is the average of $v$ on $B$. The functionals \eqref{eq:bradal97} $\Gamma$\hbox{-}converge, as $\epsilon\to0$, to the functional
\begin{equation*}
F(u):= \alpha \int_\Omega |\nabla u(x)|^2\,\mathrm{d}x + 2\beta \mathcal{H}^{d-1}(J_u)\,.
\end{equation*}
{Afterwards}, in \cite{BrG}, it has been shown that more general energies of the form
\begin{equation*}
\int_\Omega |\nabla u(x)|^2\,\mathrm{d}x + \int_{J_u}\theta(|u^+-u^-|)\,\mathrm{d}\mathcal{H}^{d-1}\,,
\end{equation*}
where $|u^+-u^-|$ is the jump of $u$ across $J_u$, can be obtained by considering non-local approximating functionals as in \eqref{eq:bradal97} with varying densities $f=f_\varepsilon$, and the function $\theta$ is computable from $f_\varepsilon$. This analysis has been continued in \cite{LV1,LV} for functionals with bulk terms having linear growth in the gradient.

The very first non-local approximation of Griffith-type energies on the footsteps of \cite{BraDal97}, inspired by the subsequent generalization \cite{CT} of such model, has been provided in \cite{Negri2006}. There, non-local convolution-type energies of the form
\begin{equation}
\int_\Omega f_\epsilon\left(\int_{\epsilon \, {\rm supp}\rho} \left|\mathcal Eu(y)\right|^p\rho_\epsilon(x-y)\,\mathrm{d}y\right)\,\mathrm{d}x
\label{eq:Negri}
\end{equation}
are considered, where $f_\epsilon$ is a suitable sequence of densities, $\rho$ is a convolution kernel with support ${\rm supp}\rho$ and $\rho_\epsilon(z)$ is the usual sequence
of convolution kernels $\rho(z/\epsilon)/\epsilon^d$. The $\Gamma$\hbox{-}limit of \eqref{eq:Negri} with respect to the $L^1$ convergence is shown to be the Griffith-type functional
\begin{equation*}
\int_{\Omega}|\mathcal{E}u(x)|^p\,\mathrm{d}x  + \int_{J_u} \phi_\rho(\nu)\,\mathrm{d}\mathcal{H}^{d-1}\,, 
\end{equation*}
where the anisotropy $\phi_\rho$ depends on the geometry and on the size of ${\rm supp}\rho$, and the function $u$ belongs to the space $SBD^p$ of special functions of bounded deformation with $\mathcal{E}u\in L^p$, which is a (proper) subspace of $GSBD^p$. The argument in \cite{Negri2006} introduces some novelties with respect to \cite{BraDal97, CT}, in particular for the proof of the lower bound, which is obtained by means of a delicate construction based on a slicing technique. However, as it happens when dealing with the space $SBD^p$, in order to obtain compactness of sequences of competitors with equibounded energy, an $L^\infty$ bound has to be imposed, which is quite unnatural in Fracture Mechanics.

\emph{Our results:} 
The purpose of our paper is to provide a variational approximation of the functional \eqref{Griff1} in the spirit of \cite{BraDal97}. We will namely show that for $f$ complying with \eqref{eq: f-intro}, the functionals
\begin{equation}
F_\epsilon(u):=\int_\Omega f\left(\epsilon\dashint_{B_\epsilon(x)\cap\Omega} W\left(\mathcal Eu(y)\right)\,\mathrm{d}y\right)\,\mathrm{d}x
\label{eq:ours}
\end{equation}
$\Gamma$-converge to the functional \eqref{Griff1} in the $L^1(\Omega)$-topology (Theorem \ref{thm:mainresult}). The proof strategy is based on the localization method for $\Gamma$-convergence (see, e.g., \cite[Chapters 14--20]{DM93}). One first considers, for any open subset $A\subset \Omega$, the localized functionals $F_\e(\cdot, A)$ defined as in \eqref{eq:ours} with $A$ in place of $\Omega$ and their asymptotic behavior. The core of the argument (essentially contained in Propositions \ref{prop:estimate}, \ref{prop:lowboundjump}, and \ref{prop:boundsp}) consists in showing that the lower $\Gamma$-limit $F'(u, A)$ satisfies the estimates
\begin{equation}
F'(u, A)\geq \alpha \int_A W(\mathcal{E}u)\,\mathrm{d}x\,,\quad F'(u, A)\geq 2\beta \int_{A\cap J_u} |\langle \nu_u, \xi \rangle|\,\mathrm{d}\mathcal{H}^{d-1}
\label{eq: core}
\end{equation}
for each $u \in GSBD^p(A)$, $A\subset \Omega$,  and $\xi$ unit vector in $\R^d$. Above, the symbol $\langle\cdot,\cdot\rangle$ denotes the scalar product in $\mathbb{R}^d$. As the two terms on the right-hand side are mutually singular, the $\Gamma$-liminf estimate can be {obtained from} these two separate estimates by a standard technique (Lemma \ref{lem: lemmasup}). 

While this general scheme has also been pursued in \cite{BraDal97}, getting to \eqref{eq: core} is rather different in our paper than it was to obtain analogous estimates in theirs. Indeed, in the $SBV$-context of Mumford-Shah-type functionals, one has the possibility of lowering the energy by truncating competitors. Hence, the main estimates can be proved for functions in $SBV\cap L^\infty$, as it is done for instance in \cite[Proposition 4.1 and Proposition 5.1]{BraDal97}, where $L^\infty$-bounds are explicitly exploited. A similar tool is not available in the bounded deformation setting. Hence, we have to renounce the semi-discrete approach of \cite{BraDal97} and follow a different strategy, which is closely related to the the heuristics of the model \eqref{eq:ours}. \\
\indent
The main idea for obtaining the first estimate in \eqref{eq: core} is contained in the proof of Proposition \ref{prop:compactness}. Using an energy estimate and the coarea formula, there we show that, for a given error parameter $\delta$, the set where  the averages $\e\dashint_{B_{(1-\delta)\e}(x)}W(\mathcal{E}u(x))\,\mathrm{d}x$ exceed a given threshold can be included in a set $K'_\e$ with vanishing area and bounded perimeter. This allows one to show that the $\Gamma$-limit of the energies \eqref{eq:ours} is controlled from below by a functional of the type \eqref{Griff1}. The optimal constant in the bulk term can be recovered, as done in Proposition \ref{prop:estimate}, by replacing a sequence of competitors $(u_\e)$ with their averages on balls of radius $\e$ at points $x \in \Omega \setminus K'_\e$. Indeed, $K'_\e$ is the set where, intuitively, the energy does  concentrate on lower dimensional manifolds and the bulk contribution can be neglected. It is worth mentioning that such an optimal estimate for the bulk term is not derived by means of any slicing procedure, which would not comply well with the general form of the bulk energy we are considering.\\
\indent
The second estimate in \eqref{eq: core} is instead obtained by means of a slicing argument in the fixed direction $\xi$ (see Proposition \ref{prop:lowboundjump}), first reconducting the problem to the analysis of the one-dimensional version of the functional \eqref{eq:bradal97} (which can be performed with elementary arguments, see \cite[Theorem~3.30]{Braides1}) and then exploiting the slicing properties of $GSBD$ functions recalled in Section \ref{sec: gsbd}. Finally, the $\Gamma$-limsup inequality (Proposition \ref{prop:upperbound}) can be obtained by a direct construction for a regular class of competitors having a ``nice'' jump set, and which are dense in energy according to recent approximation results {by Chambolle and Crismale \cite{CC}, summarized in Theorem \ref{thm:density}, and by Cortesani and Toader \cite{CT}.} 
Let us remark that our proof strategy can also be applied, with obvious modifications, for an alternative and,  in our opinion, slightly simpler proof of the results in \cite{BraDal97}.\\
\indent
To end up this review of our results, we want to motivate our choice of  the $L^1$-topology and warn the reader of a related issue. Actually, while $L^1$-convergence is a natural choice in the context of the Mumford-Shah functional, both for the possibility of using truncations and for the presence of $L^p$-fidelity terms, when dealing with fracture models it would be preferable to deal with the convergence in measure. Indeed, Proposition \ref{prop:compactness} in principle only allows one for applying Theorem \ref{th: GSDBcompactness}, which provides subsequences that are (essentially) converging in measure. \footnote{The presence of the exceptional set $A^\infty$ in the statement of Theorem \ref{th: GSDBcompactness} is no real issue in the context of the Griffith model, as setting $u=0$ there is optimal for the energy, see \cite{CC18Comp}.} However, dealing with sequences converging in $L^1$ allows us to deduce the convergence of the averaged functions in Lemma \ref{lem:lbp} which are a useful tool in our proofs. Notice that compactness in $L^1$ can be easily enforced by adding a lower-order fidelity term, for which a completely satisfactory compactness and $\Gamma$-convergence result can be stated and proved (Theorem \ref{thm:mainresult2}). It then seems to us that adding such a term, although not completely justified from the point of view of fracture mechanics, does not really affect our methods and results. \\ 
\indent
\emph{Outline of the paper:}  The paper is organized as follows. In Section~\ref{sec:notation} we fix the basic notation and collect some definitions and results on the function spaces we will deal with (Section~\ref{sec: gsbd}), together with some technical lemmas (Section~\ref{sec:lemmas}) which will be useful throughout the paper. In Section~\ref{sec:model} we list the main assumptions, introduce our model (eq. \eqref{energies0}), and state the main results of the paper, given {in} Theorem~\ref{thm:mainresult} and Theorem~\ref{thm:mainresult2}. Section~\ref{sec:compactness} contains the compactness result of Proposition~\ref{prop:compactness}. Section~\ref{sec:estimbelow} is devoted to the $\Gamma$\hbox{-}liminf inequality: the separate estimates from below of the bulk term and the surface term of the energy are contained in Sections~\ref{sec:estimbelowbulk} and \ref{sec:estimbelowsurf}, respectively; the proof of the $\Gamma$\hbox{-}liminf inequality is the content of Section~\ref{sec:gammaliminf}. The upper bound is provided in Section~\ref{sec:upperbound}.

\section{Notation and preliminary results} \label{sec:notation}

\subsection{Notation}

The symbol $\langle\cdot,\cdot\rangle$ denotes the scalar product in $\mathbb{R}^d$, while $|\cdot|$ stands for the Euclidean norm in any dimension. The symbol $\Omega$ will always denote an open, bounded subset of $\mathbb{R}^d$ {with Lipschitz boundary}. The Lebesgue measure in $\mathbb{R}^d$ and the $s$-dimensional Hausdorff measure are written as $\mathcal{L}^d$ and $\mathcal{H}^s$, respectively.  

The symbol $S^{d-1}$ will denote the $(d-1)$-dimensional unit sphere. The family of the open subsets of $\Omega$ will be denoted by $\mathcal{A}(\Omega)$.


\subsection{$GBD$ and $GSBD$ functions}\label{sec: gsbd}

We recall here some basic definitions and results on generalized functions with bounded deformation, as introduced in \cite{DM2013}. Throughout the paper we will use standard notations for the spaces $(G)SBV$ and $(G)SBD$, referring the reader to \cite{AFP} and \cite{ACDM, BCDM, Temam}, respectively, for a detailed treatment on the topics.\\

Let $\xi\in\R^d\backslash\{0\}$ and $\Pi^\xi=\{y\in\R^d:\, \langle\xi,y\rangle=0\}$. If $y\in\Pi^\xi$ and $\Omega\subset\R^d$ we set $\Omega_{\xi,y}:=\{t\in\R:\, y+t\xi\in \Omega\}$ and $\Omega_\xi:=\{y\in \Pi^\xi:\, \Omega_{\xi,y}\neq\emptyset\}$. Given $u:\Omega\to\R^d$, $d\geq2$, we define $u^{\xi,y}: \Omega_{\xi,y}\to\R$ by 
\begin{equation}
u^{\xi,y}(t):=\langle u(y+t\xi),\xi\rangle\,, 
\label{section1}
\end{equation}
while if $h: \Omega\to\R$, the symbol $h^{\xi,y}$ will denote the restriction of $h$ to the set $\Omega_{\xi,y}$; namely,
\begin{equation}
h^{\xi,y}(t):= h(y+t\xi)\,.
\label{section2}
\end{equation}

Let $\xi\in S^{d-1}$. For any $y\in\R^d$ we denote by $y_\xi$ and $y_{\xi^\perp}$ the projections onto the subspaces $\Xi:=\{t\xi:\,\,t\in\R\}$ and $\Pi^\xi$, respectively. For $\sigma\in(0,1)$ and $x\in\R^d$ we define the cylinders
\begin{equation*}
C_\sigma^\xi(0):=\{y\in\R^d:\,\, |y_\xi|<\sigma\,,\,\, |y_{\xi^\perp}|<\sqrt{1-\sigma^2}\}\,,\quad C_\sigma^\xi(x):=x+C_\sigma^\xi(0)\,.
\end{equation*}
Note that it holds $C_\sigma^\xi(x)\subseteq B_1(x)$, and that  $C_\sigma^\xi(x)=(x_\xi-\sigma,x_\xi+\sigma)\times B^{d-1}_{\sqrt{1-\sigma^2}}(x_{\xi^\perp})$, where $B^{d-1}$ denotes a ball in the $(d-1)$-dimensional space $\Pi^\xi$. 

\begin{defn}
An $\mathcal L^{d}$-measurable function $u:\Omega\to \R^{d}$ belongs to $GBD(\Omega)$ if there exists a positive bounded Radon measure $\lambda_u$ such that, for all $\tau \in C^{1}(\R^{d})$ with $-\frac12 \le \tau \le \frac12$ and $0\le \tau'\le 1$, and all $\xi \in S^{d-1}$, the distributional derivative $D_\xi (\tau(\langle u,\xi\rangle))$ is a bounded Radon measure on $\Omega$ whose total variation satisfies
$$
\left|D_\xi (\tau(\langle u,\xi\rangle))\right|(B)\le \lambda_u(B)
$$
for every Borel subset $B$ of $\Omega$. 
\end{defn}

If $u\in GBD(\Omega)$ and $\xi\in\R^d\backslash\{0\}$ then, in view of \cite[Theorem~9.1, Theorem~8.1]{DM2013}, the following properties hold:
\begin{enumerate}
\item[{\rm(a)}] $\dot{u}^{\xi,y}(t)=\langle\mathcal{E}u(y+t\xi)\xi,\xi\rangle$ for a.e. $t\in \Omega_{\xi,y}$;\\
\item[{\rm(b)}] {$J_{u^{\xi,y}}=(J_u^\xi)_{\xi,y}$} for $\mathcal{H}^{n-1}$-a.e. $y\in\Pi^\xi$, where
\begin{equation}
J_u^\xi:=\{x\in J_u:\, \langle u^+(x)-u^-(x),\xi\rangle\neq0\}\,.
\label{jumpset}
\end{equation}
\end{enumerate}

\begin{defn}
A function $u \in GBD(\Omega)$ belongs to the subset $GSBD(\Omega)$ of special functions of bounded deformation if in addition for every $\xi \in S^{d-1}$ and $\mathcal H^{d-1}$-a.e.\ $y \in \Pi^\xi$, the function $u^{\xi,y}$ belongs to {$SBV_{\mathrm{loc}}(\Omega_{\xi,y})$}.
\end{defn}

By \cite[Remark 4.5]{DM2013} one has the inclusions $BD(\Omega)\subset GBD(\Omega)$ and $SBD(\Omega)\subset GSBD(\Omega)$, which are in general strict. Some relevant properties of functions with bounded deformation can be generalized to this weak setting: in particular, in \cite[Theorem 6.2 and Theorem 9.1]{DM2013} it is shown that the jump set $J_u$ of a $GBD$-function is $\mathcal H^{d-1}$-rectifiable and that $GBD$-functions have an approximate symmetric differential $\mathcal{E}u(x)$ at $\mathcal L^{d}$-a.e.\ $x\in \Omega$, respectively. {Let $p>1$.} The space $GSBD^p(\Omega)$ is defined through:
$$
GSBD^p (\Omega):= \{u \in GSBD(\Omega): \mathcal{E}u \in L^p (\Omega; \mathbb R_{\mathrm{sym}}^{d\times d})\,,\,\mathcal H^{d-1}(J_u) < +\infty\}\,.
$$

Every function in $GSBD^p(\Omega)$ is approximated by bounded $SBV$ functions with more regular jump set, as stated by the following result (\cite[Theorem~1.1]{CC}). 

\begin{thm}
Let $\Omega\subset\mathbb{R}^d$ be a bounded open Lipschitz set, and let $u\in GSBD^p(\Omega;\R^d)$. Then there exists a sequence {$(u_n)$} such that\\
${\rm (i)}$ $u_n\in SBV^p(\Omega;\R^d)\cap L^\infty(\Omega;\R^d)$;\\
${\rm (ii)}$ each $J_{u_n}$ is closed and included in a finite union of closed connected pieces of $C^1$-hypersurfaces;\\
${\rm (iii)}$ $u_n\in W^{1,\infty}({\Omega}\backslash J_{u_n};\R^d)$, and
\begin{align}
u_n\to u \mbox{ in measure on $\Omega$},\label{convme}\\
\mathcal{E}u_n \to \mathcal{E}u \mbox{ in $L^p(\Omega;\R^{d\times d}_{sym})$,}\label{convgrad}\\
\mathcal{H}^{d-1}(J_{u_n}\triangle J_u)\to0.\label{convjump}
\end{align}
Moreover, if $\int_\Omega \psi(|u|)\,\mathrm{d}x$ is finite for $\psi:[0,+\infty)\to[0,+\infty)$ continuous, {increasing}, with
\begin{equation*}
\psi(0)=0,\,\,\, \psi(s+t)\le C(\psi(s)+\psi(t)), \,\,\, \psi(s)\le C(1+s^p), \,\,\, {\lim_{s\to+\infty}{\psi(s)}=+\infty}
\end{equation*} 
then
\begin{equation}
\lim_{n\to+\infty}\int_\Omega \psi(|u_n-u|)\,\mathrm{d}x=0\,.
\label{eq:densityfidel}
\end{equation}
\label{thm:density}
\end{thm}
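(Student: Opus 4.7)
The plan is to construct the approximating sequence by a localization procedure: approximate $J_u$ by finitely many $C^1$ pieces, extend $u$ across each piece on both sides and mollify, and then glue the local constructions via a partition of unity. Since $L^\infty$-truncation is not directly available in the vector-valued $GSBD^p$-setting, the $L^\infty$-bound will have to be incorporated at the final step via a vector-valued truncation that is compatible with the smoothed structure.

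First I would fix a small parameter $\eta>0$ and exploit the $\mathcal{H}^{d-1}$-rectifiability of $J_u$ together with the approximate continuity of the unit normal $\nu_u$. By a Besicovitch covering and a blow-up argument, there is a finite pairwise disjoint family of closed balls $\overline{B}_{r_i}(x_i)\subset\Omega$, with hyperplanes $H_i=x_i+\nu_i^\perp$, such that $J_u\cap B_{r_i}(x_i)$ lies in a tubular neighborhood of $H_i$ of thickness $\eta r_i$, while the residual part $N:=J_u\setminus\bigcup_i B_{r_i}(x_i)$ has $\mathcal{H}^{d-1}(N)<\eta$. Within each ball I would further replace $J_u\cap B_{r_i}(x_i)$ by a $C^1$ graph $\Gamma_i$ over $H_i$ that coincides with $J_u$ up to a set of small $\mathcal{H}^{d-1}$-measure, thereby cutting $B_{r_i}(x_i)$ into two Lipschitz half-domains $B_i^\pm$.

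The local regularization step is the core of the construction. On each half-domain $B_i^\pm$ the restriction $u|_{B_i^\pm}$ has no interior jumps (up to a set controlled by $\eta$) and hence, by the Korn-type inequality for $GSBD^p$-functions of Chambolle, Conti and Iurlano, it coincides outside a small set with a Sobolev function. I would reflect each side across $\Gamma_i$, mollify at a scale $\rho_i\ll r_i$ to produce $u_i^\pm\in W^{1,\infty}(B_i^\pm;\mathbb{R}^d)$ with $\mathcal{E}u_i^\pm\to\mathcal{E}u$ in $L^p(B_i^\pm)$, and accept $\Gamma_i$ as the new jump set in $B_{r_i}(x_i)$. Away from $\bigcup_i \overline{B}_{r_i}(x_i)$ the function $u$ is close to a $W^{1,p}$ function, and a further standard mollification delivers $u_0\in C^\infty$ there. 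A partition of unity $\{\varphi_0,\varphi_i\}$ subordinate to $\{\Omega\setminus\bigcup_i\overline{B}_{r_i/2}(x_i),\,B_{r_i}(x_i)\}$ then glues the local pieces into a global $u_\eta$ whose jump set is contained in $\bigcup_i\Gamma_i$, hence in a finite union of $C^1$-hypersurfaces.

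The main obstacle is controlling the error introduced by the gluing: derivatives of the cutoffs acting on $u$ produce terms of the form $|\nabla\varphi_i|\,|u-u_i^\pm|$ in the symmetrized gradient, and these cannot be bounded a priori by $|u|_\infty$, since in $GSBD^p$ no $L^\infty$-bound is available. This forces one to perform the gluing only in transition regions where the local approximations $u_i^\pm$ and $u_0$ are already quantitatively close in $L^p$, which is exactly where the $GSBD^p$-Korn inequality enters. Once $u_\eta$ is built, convergence in measure, the $L^p$-convergence \eqref{convgrad} of the symmetrized gradients, and the jump-set convergence \eqref{convjump} follow by letting $\eta\to0$ and a diagonal extraction. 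An $L^\infty$-truncation of $u_\eta$ at a level diverging as $\eta\to0$ then yields property (i), while \eqref{eq:densityfidel} is obtained by dominated convergence, using the growth assumptions on $\psi$ together with convergence in measure and the $L^p$-bound on $|u_\eta|$ on the regions where $u$ itself satisfies $\int_\Omega \psi(|u|)\,dx<\infty$.
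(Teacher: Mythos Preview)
The paper does not contain a proof of this statement: Theorem~\ref{thm:density} is quoted from \cite[Theorem~1.1]{CC} (Chambolle--Crismale) without argument, and is used purely as a black-box input for the $\Gamma$-limsup construction in Proposition~\ref{prop:upperbound}. There is therefore nothing in the paper to compare your proposal against.

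For what it is worth, your outline is broadly in the spirit of the Chambolle--Crismale proof (cover $J_u$ by balls where the jump is nearly flat, invoke a $GSBD^p$ Korn--Poincar\'e inequality, reflect and mollify on each side, glue by a partition of unity), but it is schematic precisely at the points that make that proof hard. The assertion that on each half-ball $u$ ``has no interior jumps (up to a set controlled by $\eta$)'' is not correct as stated: the residual jump can only be hidden inside a small-measure exceptional set coming from the Korn inequality, and the \emph{perimeter} of that set must itself be controlled and absorbed into the approximating $C^1$ hypersurfaces---this is one of the main technical steps in \cite{CC}. The reflection across $\Gamma_i$ likewise presupposes one-sided $W^{1,p}$ extensions that are only available after this excision, not for a raw $GSBD$ function. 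Finally, the closing $L^\infty$ truncation cannot be a plain radial cutoff, since that does not preserve control on $\mathcal{E}u_\eta$; a more careful construction is needed.
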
 

A further approximation result, by Cortesani and Toader \cite[Theorem~3.9]{CorToa99},  
allows us to approximate $GSBD^p(\Omega)$ functions with the so-called ``piecewise smooth''  $SBV$-functions, denoted $\mathcal{W}(\Omega;\Rd)$, characterized by the three properties 
\begin{equation}\label{1412191008}
\begin{cases}
u\in SBV(\Omega;\Rd)\cap W^{m,\infty}(\Omega\sm J_u;\Rd) \,\text{for every }m\in \N\,,\\
\hd(\overline{J}_u \sm J_u ) = 0\,,\\
\overline{J}_u \text{ is the intersection of $\Omega$ with a finite union of ${(d{-}1)}$-dimensional simplexes}\,.
\end{cases}
\end{equation}
{Notice that for the results above only the coercivity of $\psi$ is needed, while we will require $\psi$ to be superlinear at infinity in order to infer strong $L^1$-convergence (see Remark~\ref{rem: compactness} below).}

We recall the following general $GSBD^p$ compactness result from \cite{CC18Comp}, which generalizes \cite[Theorem 11.3]{DM2013}.  In the statement the symbol $\partial^*$ denotes the essential boundary of a set with finite perimeter.  We keep this general form of the statement. However, since we have to enforce $L^1$-convergence of sequences with bounded energy, the situation which is relevant for our purposes is described in Remark \ref{rem: compactness} below.
\begin{thm}[$GSBD^p$ compactness]\label{th: GSDBcompactness}
 Let $\Omega \subset \R$ be an open, bounded set,  and let $(u_n)_n \subset  GSBD^p(\Omega)$ be a sequence satisfying
$$ \sup\nolimits_{n\in \N} \big( \Vert \mathcal{E}u_n \Vert_{L^p(\Omega)} + \mathcal{H}^{d-1}(J_{u_n})\big) < + \infty.$$
Then there exists a subsequence, still denoted by {$(u_n)$}, such that the set  ${A^\infty} := \lbrace x\in \Omega: \, |u_n(x)| \to +\infty \rbrace$ has finite perimeter, and  there exists  $u \in GSBD^p(\Omega)$ such that 
\begin{align}\label{eq: GSBD comp}
{\rm (i)} & \ \ u_n \to u \  \ \ \  \mbox{ in measure on $\Omega\sm A^\infty$}, \notag \\ 
{\rm (ii)} & \ \ \mathcal{E}u_n \rightharpoonup\mathcal{E}u \ \ \ \text{ in } L^p(\Omega \setminus A^\infty; \Mdd),\notag \\
{\rm (iii)} & \ \ \liminf_{n \to \infty} \mathcal{H}^{d-1}(J_{u_n}) \ge \mathcal{H}^{d-1}(J_u \cup  (\partial^*A^\infty \cap\Omega)  ).
\end{align}
\end{thm}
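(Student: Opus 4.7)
The plan is to follow the strategy introduced in \cite{CC18Comp}, whose core technical ingredient is a Korn--Poincar\'e inequality for $GSBD^p$-functions: on any ball $B_r\subset\R^d$ and for any $v \in GSBD^p(B_r)$ with $\mathcal{H}^{d-1}(J_v) \le \eta\, r^{d-1}$ (with $\eta$ a small universal constant), there exist an exceptional set $\omega\subset B_r$ with $\mathcal{L}^d(\omega) \le C\, r\,\mathcal{H}^{d-1}(J_v)$ and a rigid motion $a(x)=Ax+b$ such that
\[
\int_{B_r\setminus\omega} |v-a|^{p^*}\dd x \le C\bigg(\int_{B_r}|\mathcal{E}v|^p\dd x\bigg)^{p^*/p},
\]
with $p^\ast=dp/(d-p)$ if $p<d$ and any finite exponent otherwise. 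This is the bounded-deformation substitute for the elementary Poincar\'e inequality and, importantly, allows one to replace the $L^\infty$-truncations that work well in the $SBV$-setting by a finer local modification taking into account the low regularity of $GSBD^p$-functions.

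First, I would cover $\Omega$ with a fine family of balls $B_i^{(k)}$ of radius $r_k\to 0$ and, for each $n$, call \emph{bad} those balls on which $\mathcal{H}^{d-1}(J_{u_n})$ exceeds the smallness threshold required above; by the uniform bound on the jump-set mass, the total volume of bad balls is uniformly small. On each good ball the Korn--Poincar\'e inequality produces an exceptional set $\omega_{n,i}$ of small measure and a rigid motion $a_{n,i}$ such that $u_n-a_{n,i}$ is bounded in $L^{p^\ast}(B_i\setminus\omega_{n,i})$. A diagonal extraction in $n$ over the countable family of balls, and across all scales $r_k$, yields a subsequence along which on each fixed ball either $|a_{n,i}|$ stays bounded (and converges to some $a_{\infty,i}$) or $|a_{n,i}|\to +\infty$.

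Next, I would define $A^\infty$ as the essential union of the balls where the rigid motions diverge. To prove that $A^\infty$ has finite perimeter and
\[
\mathcal{H}^{d-1}\bigl(\partial^* A^\infty \cap \Omega\bigr)+\mathcal{H}^{d-1}(J_u\setminus \partial^*A^\infty)\le \liminf_n \mathcal{H}^{d-1}(J_{u_n}),
\]
the key observation is that across an interface separating a good ball with bounded rigid motions from one whose rigid motions diverge, the displacements $u_n$ must jump by arbitrarily large amounts, so that, up to the exceptional sets $\omega_{n,i}$, the interface must be essentially covered by $J_{u_n}$ in the limit $r_k\to 0$. On $\Omega\setminus A^\infty$, the limit $u$ is then constructed by patching together $\lim_n(u_n-a_{n,i})+a_{\infty,i}$ across overlapping balls, after reconciling representatives by adding suitable rigid motions where balls overlap; (i) and (ii) follow from this construction combined with the weak-$L^p$ compactness of $\mathcal{E}u_n$ on each good ball, while (iii) follows by combining the interface estimate above with the lower semicontinuity of $\mathcal{H}^{d-1}$ on countably $\mathcal{H}^{d-1}$-rectifiable sets under convergence in measure of $GSBD$-functions.

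The hard part is the delicate combinatorial and measure-theoretic analysis needed in the second step: reconciling the locally defined rigid motions $a_{n,i}$ on overlapping balls while quantitatively controlling how their jumps across interfaces contribute to $J_{u_n}$, and verifying that, as $r_k\to 0$, the interface-covering property remains sharp enough to recover the full jump-set content of both $J_u$ and $\partial^*A^\infty$ without double counting. This interplay between jump-set concentration and rigid-motion divergence is precisely where the full technical weight of \cite{CC18Comp} lies, and I do not see any shortcut that would avoid invoking the Korn--Poincar\'e inequality in an essential way.
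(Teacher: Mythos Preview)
The paper does not prove this theorem at all: it is quoted verbatim from \cite{CC18Comp} as a known result, with no proof given. Your outline is a reasonable high-level sketch of the strategy actually used in \cite{CC18Comp}, so in that sense you are aligned with ``the paper's own proof'' (namely, the reference it cites); there is nothing further to compare.
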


\begin{oss}\label{rem: compactness}
If one additionally has
\[
 \sup\nolimits_{n\in \N}\int_\Omega  \psi(|u_n|)\,\mathrm{d}x<+\infty
\]
for a continuous, positive, increasing function $\psi$ satisfying
\begin{equation*}
\lim_{s\to+\infty}\frac{\psi(s)}{s}=+\infty\,,
\end{equation*}
then $u \in L^1(\Omega)$, so that {$A^\infty=\emptyset$}. Furthermore, ${\rm (i)}$ holds with respect to the $L^1$-convergence in $\Omega$, by the Vitali dominated convergence theorem.
\end{oss}

\subsection{Some lemmas}\label{sec:lemmas}

We recall here the following property of commutability of the integral averages with the gradient operator for a Sobolev vector-valued function. The proof is based on standard arguments by test functions, so we omit the details.

\begin{prop}
Let $u\in W^{1,p}(\Omega;\R^d)$. Let $\Omega' \subset\subset \Omega$ and $0\le \eta \le \mathrm{dist}(\Omega', \partial \Omega)$. Then the average
\begin{equation*}
\varphi(x):=\dashint_{B_\eta(x)} u(y)\,\mathrm{d}y
\end{equation*}
belongs to $W^{1,p}(\Omega';\R^d)$. Moreover, it holds that
\begin{equation}
\nabla \varphi (x) = \dashint_{B_\eta(x)} \nabla u(y)\,\mathrm{d}y \quad \mbox{ a.e. on $\Omega'$.}
\label{commut}
\end{equation}
\end{prop}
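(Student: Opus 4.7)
The plan is to proceed by the standard test-function computation, exploiting the weak differentiability of $u$ on the whole of $\Omega$ and Fubini's theorem to transfer the derivative across the averaging operator. No density argument is really needed, but one could equivalently verify the identity first for $u \in C^\infty(\Omega) \cap W^{1,p}(\Omega)$ (where it is trivial by differentiation under the integral sign) and then pass to the limit using the continuity of the averaging operator in $L^p$.

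First I would check that $\varphi \in L^p(\Omega';\R^d)$. By Jensen's inequality $|\varphi(x)|^p \le \dashint_{B_\eta(x)} |u(y)|^p\,\mathrm{d}y$, so Fubini and the inclusion $B_\eta(x)\subset\Omega$ for $x\in\Omega'$ yield
\begin{equation*}
\int_{\Omega'} |\varphi(x)|^p\,\mathrm{d}x \le \frac{1}{|B_\eta|}\int_{B_\eta(0)}\int_{\Omega'}|u(x+z)|^p\,\mathrm{d}x\,\mathrm{d}z \le \|u\|^p_{L^p(\Omega)}.
\end{equation*}
Then, for a scalar test function $\psi \in C_c^\infty(\Omega')$ and $i\in\{1,\dots,d\}$, I would rewrite $\varphi$ componentwise as $\varphi_j(x) = |B_\eta|^{-1}\int_{B_\eta(0)} u_j(x+z)\,\mathrm{d}z$ and compute
\begin{equation*}
\int_{\Omega'} \varphi_j(x)\,\partial_i\psi(x)\,\mathrm{d}x = \frac{1}{|B_\eta|}\int_{B_\eta(0)}\!\left(\int_{\Omega'} u_j(x+z)\,\partial_i\psi(x)\,\mathrm{d}x\right)\mathrm{d}z,
\end{equation*}
by Fubini (whose hypotheses are met since $u\in L^p\subset L^1_{\mathrm{loc}}$ and the domain is bounded). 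For each fixed $z$ with $|z|\le\eta$, the translate $y\mapsto \psi(y-z)$ is a valid test function in $C_c^\infty(\Omega)$: indeed $\mathrm{supp}\,\psi\subset\Omega'$ and $\mathrm{dist}(\Omega',\partial\Omega)\ge \eta$ force $z+\mathrm{supp}\,\psi\subset\Omega$. Changing variables $y=x+z$ and using the weak partial derivative of $u$ on $\Omega$ gives
\begin{equation*}
\int_{\Omega'} u_j(x+z)\,\partial_i\psi(x)\,\mathrm{d}x = -\int_{\Omega'} \partial_i u_j(x+z)\,\psi(x)\,\mathrm{d}x.
\end{equation*}
Substituting this back and applying Fubini once more produces the identity
\begin{equation*}
\int_{\Omega'}\varphi_j\,\partial_i\psi\,\mathrm{d}x = -\int_{\Omega'}\left(\dashint_{B_\eta(x)}\partial_i u_j(y)\,\mathrm{d}y\right)\psi(x)\,\mathrm{d}x,
\end{equation*}
which identifies the weak partial derivative $\partial_i \varphi_j$ with $\dashint_{B_\eta(x)}\partial_i u_j\,\mathrm{d}y$, i.e.\ \eqref{commut}. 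The $L^p(\Omega')$ bound on $\nabla\varphi$ follows by the same Jensen--Fubini argument applied to $\nabla u\in L^p(\Omega;\R^{d\times d})$, giving $\varphi\in W^{1,p}(\Omega';\R^d)$.

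There is no real obstacle beyond bookkeeping: the only delicate point is ensuring that the translated test function $\psi(\cdot-z)$ remains compactly supported in $\Omega$, which is guaranteed by the standing assumption $\eta\le\mathrm{dist}(\Omega',\partial\Omega)$ together with $\mathrm{supp}\,\psi\Subset\Omega'$. The degenerate case $\eta=0$ gives $\varphi=u$ and \eqref{commut} trivially, so one may assume $\eta>0$ without loss of generality.
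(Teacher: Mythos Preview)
Your proposal is correct and is precisely the ``standard arguments by test functions'' that the paper alludes to (the paper omits the details). The only nontrivial point---that the translated test function $\psi(\cdot-z)$ remains compactly supported in $\Omega$ for $z\in B_\eta(0)$---is exactly what you single out, and your justification via $\mathrm{supp}\,\psi\Subset\Omega'$ and $\eta\le\mathrm{dist}(\Omega',\partial\Omega)$ is the right one.
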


We will make use of the following convergence properties of averaged functions. These are probably well-known, a short proof is however added for the reader's convenience.

\begin{lem}\label{lem:lbp}
Assume that $w_{\e}\rightarrow w$ in $L^1(\Omega;\R^d)$ and let $\eta_\varepsilon$ be any sequence with $\eta_\varepsilon \to 0$ when $\varepsilon \to 0$. Then the following holds:
\begin{enumerate}
\begin{item}[{\rm (i)}]
 the sequence
	\[
	\hat{w}_\e(x):=\dashint_{B_{\eta_\varepsilon}(x)} w_{\e}(y)\,\mathrm{d} y 
	\]
satisfies $\hat{w}_\e \to w$ in $L^1(\Omega;\R^d)$;
\end{item}
\begin{item}[{\rm (ii)}]
for all $\xi \in S^{d-1}$ and a.e. \ $y \in \Pi^{\xi}$, the sequence
\[
\hat{w}^{\xi, y}_\e(t):=\dashint_{B^{d-1}_{\eta_\varepsilon}(y)} w_{\e}(z+t\xi)\,\mathrm{d}z
\]
satisfies  $\hat{w}^{\xi, y}_\e \to w^{\xi, y}$ in $L^1(\Omega_{\xi, y} ;\R^d)$, where $w^{\xi, y}(t):=w(y+t\xi)$.
\end{item}
\end{enumerate}
\end{lem}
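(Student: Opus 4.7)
The plan is to reduce both statements to classical approximate identity arguments, after extending the functions by zero outside $\Omega$ so that the averages become genuine convolutions.

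For part (i), write $\hat{w}_\e = K_{\eta_\e} \ast w_\e$, where $K_{\eta_\e}$ is the normalized characteristic function of $B_{\eta_\e}(0)$, and introduce the comparison object $K_{\eta_\e} \ast w$. By the triangle inequality
\[
\|\hat{w}_\e - w\|_{L^1(\Omega)} \leq \|K_{\eta_\e} \ast (w_\e - w)\|_{L^1(\R^d)} + \|K_{\eta_\e} \ast w - w\|_{L^1(\R^d)}.
\]
The second term tends to zero because $(K_{\eta_\e})$ is an approximate identity (classical mollifier lemma, e.g.\ density of continuous functions in $L^1$). For the first, a direct Fubini swap gives
\[
\|K_{\eta_\e} \ast (w_\e - w)\|_{L^1(\R^d)} \leq \|K_{\eta_\e}\|_{L^1}\,\|w_\e - w\|_{L^1(\R^d)} = \|w_\e - w\|_{L^1(\Omega)},
\]
which vanishes by assumption.

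For part (ii), I would follow the same scheme but on slices. Define
\[
\tilde{w}_\e^{\xi,y}(t) := \dashint_{B^{d-1}_{\eta_\e}(y)} w(z+t\xi)\,\mathrm{d}\mathcal{H}^{d-1}(z)
\]
and split $\hat{w}_\e^{\xi,y} - w^{\xi,y} = (\hat{w}_\e^{\xi,y} - \tilde{w}_\e^{\xi,y}) + (\tilde{w}_\e^{\xi,y} - w^{\xi,y})$. Integrating over $y \in \Pi^\xi$ and using Fubini (swapping the $y$ and $z$ integrations in the $B^{d-1}_{\eta_\e}(y)$ average) yields
\[
\int_{\Pi^\xi}\|\hat{w}_\e^{\xi,y} - \tilde{w}_\e^{\xi,y}\|_{L^1(\Omega_{\xi,y})}\,\mathrm{d}\mathcal{H}^{d-1}(y) \leq \|w_\e - w\|_{L^1(\Omega)} \to 0,
\]
while the remaining term
\[
\int_{\Pi^\xi}\|\tilde{w}_\e^{\xi,y} - w^{\xi,y}\|_{L^1(\Omega_{\xi,y})}\,\mathrm{d}\mathcal{H}^{d-1}(y)
\]
is precisely $\|w \ast_{\Pi^\xi} \phi_{\eta_\e} - w\|_{L^1(\R^d)}$, the error of convolving $w$ with an approximate identity supported only in the hyperplane $\Pi^\xi$, which again tends to zero. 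Thus the $y$-integrated norm goes to zero, and up to extracting a subsequence (not relabeled) the pointwise a.e.\ convergence $\|\hat{w}_\e^{\xi,y} - w^{\xi,y}\|_{L^1(\Omega_{\xi,y})} \to 0$ follows.

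The only conceptual subtlety is that part (ii) gives pointwise-in-$y$ convergence only along a subsequence, but this is exactly what is needed in the applications (combined, in the rest of the paper, with slicing arguments that already require passing to subsequences). The rest is a routine manipulation; the main bookkeeping concern is keeping the ball-average and Fubini interchanges honest after extending by zero, which is harmless since $\Omega$ is bounded and $w_\e, w \in L^1(\Omega;\R^d)$.
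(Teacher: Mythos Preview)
Your argument is correct and follows the same overall scheme as the paper (extend by zero, recognize the ball-averages as convolutions, reduce to approximate-identity type estimates). The difference lies only in the triangle-inequality split. The paper compares $\hat{w}_\e$ directly to $w_\e$ and invokes the Fr\'echet--Kolmogorov criterion (an $L^1$-convergent sequence is uniformly equicontinuous under translation) to show $\|\hat{w}_\e - w_\e\|_{L^1}\to 0$; you instead compare $\hat{w}_\e$ to the mollification $K_{\eta_\e}*w$ of the \emph{fixed} limit and control the cross term by Young's inequality. Your decomposition is slightly more elementary, as it avoids appealing to the compactness criterion and only uses the standard approximate-identity lemma for the single function $w$. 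For part~(ii) both proofs proceed identically in spirit, and both yield the a.e.-in-$y$ statement only along a subsequence---a point you correctly flag and which, as you note, is all that is used downstream in the slicing arguments.
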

\begin{proof}
We extend the functions $w_\e$ {to} the value $0$ in $\R^d \setminus \Omega$. For (i), we observe that with the change of variable $y=x+\eta_\e z$ and Fubini's theorem one has
\[
\int_\Omega |\hat{w}_\e(x)-w_\e(x)|\,\mathrm{d}x\le \dashint_{B_1(0)}\left(\int_\Omega \left|w_\e(x+\eta_\e z)-w_\e(x)\right|\,\mathrm{d}x\right)\,\mathrm{d}z
\]
For any fixed $\theta >0$ one has now, for small $\e$
\[
\int_\Omega |w_\e(x+\eta_\e z)-w_\e(x)|\,\mathrm{d}x\le \theta
\]
uniformly with respect to $z\in B_1(0)$ by the Fr\'echet-Kolmogorov criterion. It follows that
\[
\lim_{\e \to 0} \int_\Omega |\hat{w}_\e(x)-w_\e(x)|\,\mathrm{d}x=0\,,
\]
which gives (i).

Concerning (ii), write $x=y+t\xi$ and set $w^{\xi, y}_\e(t):=w_{\e}(y+t\xi)$. It holds that  $w^{\xi, y}_\e \to w^{\xi, y}$ in $L^1(\Omega_{\xi, y} ;\R^d)$ for a.e. $y \in \Pi^{\xi}$. For $\hat{w}^{\xi}_\e(x):=\hat{w}^{\xi, y}_\e(t)$ we have now  with the change of variable $z=y+\eta_\e z^\prime$ and Fubini's theorem that
\[
\int_\Omega |\hat{w}^{\xi}_\e(x)-w_\e(x)|\,\mathrm{d}x\le \dashint_{B^{d-1}_1(0)}\left(\int_\Omega \left|w_\e(x+\eta_\e z^\prime)-w_\e(x)\right|\,\mathrm{d}x\right)\,\mathrm{d}z^{\prime}
\]
so that, arguing as before,
\[
\hat{w}^{\xi}_\e-w_\e \to 0 \mbox{ in }L^1(\Omega; \R^d)\,.
\]
Hence, (ii) follows from the analogous convergence properties of the slices $w_\e^{\xi, y}(t)$.
\end{proof}

We will make also use of the following property of finite coverings of a bounded set.

\begin{oss}\label{rem:innercovering}
Let $R>0$ and $E\subset\R^d$ be such that $E\subset B_R$. Then, for every $r>0$, there exists a finite subset $E'\subset E$ such that
\begin{equation*}
E\subset \bigcup_{x\in E'} B_r(x)
\end{equation*}
and $\#(E')$ depends only on $r, R$ and $d$. If, in addition, $\frac{R}{r}\leq\lambda$, then there exists a constant $C=C(\lambda,d)$ such that $\#(E')\leq C$.

For this, we note that the family of balls
\begin{equation*}
\mathcal{B}:=\left\{B_{\frac{r}{2}}(z):\,\, z\in\left(\frac{r}{2\sqrt{d}}\mathbb{Z}\right)^d\cap B_R\right\}
\end{equation*}
is a covering of $B_R$. Now, for every ball $B_{\frac{r}{2}}(z)$ in $\mathcal{B}$ such that $B_{\frac{r}{2}}(z)\cap E\neq\emptyset$, we choose  a point $x=x(z)\in B_{\frac{r}{2}}(z)\cap E$ and so doing we construct the set 
\begin{equation*}
E':=\left\{x=x(z):\,\, z\in\left(\frac{r}{2\sqrt{d}}\mathbb{Z}\right)^d\cap B_R\,,\,\, B_{\frac{r}{2}}(z)\cap E\neq\emptyset\right\}\,.
\end{equation*}
Then, the desired covering of $E$ is given by $\{B_r(x):\,\, x\in E'\}$. Note also that
\begin{equation}
\#(E')\leq \#\left(\left(\frac{r}{2\sqrt{d}}\mathbb{Z}\right)^d\cap B_R\right):=C(r,R,d)<+\infty\,.
\label{bound}
\end{equation}
From \eqref{bound} and a simple scaling argument, we infer that $C(r,R,d)$ is uniformly bounded when the ratio $\frac{R}{r}$ is bounded.
\end{oss}


The following result, dealing with the supremum of a family of measures, will be useful for the derivation of the $\Gamma$\hbox{-}liminf
inequality (see, e.g., \cite[Proposition 1.16]{Braides1}).

\begin{lem}\label{lem: lemmasup}
Let $\mu:\mathcal{A}(\Omega)\longrightarrow[0,+\infty)$ be a superadditive function on disjoint open sets, let $\lambda$ be a positive measure on $\Omega$ and let $\varphi_h: \Omega\longrightarrow[0,+\infty]$ be a countable family of Borel functions such that $\mu(A)\geq\int_A\varphi_h\,\mathrm{d}\lambda$ for every $A\in\mathcal{A}(\Omega)$. Then, setting $\varphi:=\sup_{h\in\N}\varphi_h$, it holds that
\begin{equation*}
\mu(A)\geq\int_C\varphi\,\mathrm{d}\lambda
\end{equation*}
for every $A\in\mathcal{A}(\Omega)$.
\end{lem}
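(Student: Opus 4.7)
The plan is to reduce the countable family to the finite case by monotone convergence, and then to handle the finite case by a partition-plus-approximation argument exploiting the superadditivity of $\mu$ on disjoint open sets. Throughout, I will keep $A\in\mathcal{A}(\Omega)$ fixed and may assume $\mu(A)<+\infty$, for otherwise the inequality is trivial; note that this also forces $\int_A\varphi_h\,\dd\lambda\leq \mu(A)<+\infty$ for each $h$.

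\textbf{Reduction to a finite family.} Set $\psi_k:=\max_{1\leq h\leq k}\varphi_h$. Since $0\leq \psi_k \nearrow \varphi$ as $k\to\infty$, the monotone convergence theorem gives $\int_A\psi_k\,\dd\lambda\to \int_A\varphi\,\dd\lambda$. Therefore it suffices to prove, for every fixed $k\in\N$, the inequality
\begin{equation*}
\mu(A)\geq \int_A \psi_k \, \dd\lambda.
\end{equation*}

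\textbf{Finite case.} Partition $A$ into disjoint Borel sets
\begin{equation*}
B_h:=\bigl\{x\in A:\, \varphi_h(x)>\varphi_j(x) \text{ for } j<h,\ \varphi_h(x)\geq \varphi_j(x) \text{ for } j>h\bigr\},\quad h=1,\ldots,k,
\end{equation*}
so that $\psi_k=\varphi_h$ on $B_h$ and $\int_A\psi_k\,\dd\lambda=\sum_{h=1}^k\int_{B_h}\varphi_h\,\dd\lambda$. Fix $\eta>0$. Using the inner regularity of the finite Borel measures $\varphi_h\,\dd\lambda\res B_h$ on the open subset $\Omega$ of $\R^d$, I choose compact sets $K_h\subset B_h$ with
\begin{equation*}
\int_{B_h\setminus K_h}\varphi_h\,\dd\lambda<\frac{\eta}{k}.
\end{equation*}
Since the $K_h$ are pairwise disjoint compact subsets of the open set $A$, a standard separation argument in the metric space $\R^d$ produces pairwise disjoint open sets $U_h\subset A$ with $K_h\subset U_h$.

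\textbf{Conclusion.} The hypothesis applied to each $U_h$, together with the superadditivity of $\mu$ on the disjoint family $\{U_h\}_{h=1}^k$, gives
\begin{equation*}
\sum_{h=1}^k \int_{U_h}\varphi_h\,\dd\lambda\leq \sum_{h=1}^k \mu(U_h)\leq \mu(A).
\end{equation*}
On the other hand,
\begin{equation*}
\int_A\psi_k\,\dd\lambda=\sum_{h=1}^k\int_{B_h}\varphi_h\,\dd\lambda\leq \sum_{h=1}^k\int_{K_h}\varphi_h\,\dd\lambda+\eta\leq \sum_{h=1}^k\int_{U_h}\varphi_h\,\dd\lambda+\eta,
\end{equation*}
where we used $\varphi_h\geq 0$ and $K_h\subset U_h$. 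Combining the two displays yields $\int_A\psi_k\,\dd\lambda\leq \mu(A)+\eta$; letting $\eta\to 0^+$ and then $k\to\infty$ proves the claim.

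\textbf{Expected obstacle.} The only delicate point is the inner regularity step used to produce the compact sets $K_h$: it relies on the measures $\varphi_h\,\dd\lambda\res B_h$ being finite Radon-type measures on $\Omega\subset\R^d$, which is guaranteed since $\lambda$ is a positive (hence locally finite, and on a separable metric space inner regular) Borel measure and $\int_{B_h}\varphi_h\,\dd\lambda<+\infty$. Everything else is a straightforward bookkeeping exercise exploiting that $\mu$ is only required to be superadditive (not $\sigma$-subadditive) on disjoint open sets.
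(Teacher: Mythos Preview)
The paper does not supply its own proof of this lemma; it simply cites \cite[Proposition~1.16]{Braides1}. Your argument is precisely the standard one given there: reduce to a finite family by monotone convergence, partition $A$ according to where the maximum is attained, approximate the pieces from inside by disjoint compacts, separate these by disjoint open sets, and invoke superadditivity.

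One small caveat worth flagging: the step $\sum_{h=1}^k\mu(U_h)\le\mu(A)$ uses not only superadditivity on disjoint open sets but also that $\mu$ is \emph{increasing} (so that $\mu\bigl(\bigcup_h U_h\bigr)\le\mu(A)$). This monotonicity is not literally part of the lemma as stated in the paper, but it is part of Braides' hypothesis and is satisfied in the paper's application, where $\mu(A)=F'(u,A)$ (resp.\ $G'(u,A)$) inherits monotonicity from the localized functionals $F_\varepsilon(\cdot,A)$. With that understood, your proof is correct.
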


We also remark the following approximation property from below for lower semicontinuous increasing functions with truncated affine functions. A proof is given for the reader's convenience.
\begin{lem}\label{lem: belowapprox}
Consider a lower semicontinuous increasing function $f:[0,+\infty)\to[0,+\infty)$ such that there exist $\alpha, \beta>0$ with 
\[
\lim_{t\to 0^+}\frac{f(t)}t=\alpha, \quad \lim_{t\to +\infty} f(t)=\beta\,.
\]
Then there exist two positive sequences $(a_i)_{i\in \mathbb{N}}$, $(b_i)_{i\in \mathbb{N}}$ with
\[
\sup_i a_i= \alpha, \quad \sup_i b_i=\beta 
\]
and $\min\{a_i t, b_i\}\le f(t)$ for all $i \in \mathbb{N}$ and $t \in \R$.
\end{lem}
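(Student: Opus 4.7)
The plan is to parametrize all candidate pairs by a single scalar $T>0$: set
\[
g(T):=\inf_{t\in(0,T]}\frac{f(t)}{t},\qquad a(T):=g(T),\qquad b(T):=T\,g(T),
\]
and construct $(a_i,b_i)$ by picking two different families of values of $T$. First I would check that each pair $(a(T),b(T))$ is admissible: for $t\in(0,T]$ the definition of $g(T)$ gives $a(T)\,t\le f(t)$; for $t>T$ one estimates $b(T)=T\,g(T)\le T\cdot f(T)/T=f(T)\le f(t)$ by monotonicity. This yields $\min\{a(T)t,b(T)\}\le f(t)$ for every $t\ge0$. Note also that any admissible $(a,b)$ automatically satisfies $a\le\alpha$ and $b\le\beta$, since $a\,t\le f(t)$ near $t=0$ forces $a\le \lim_{t\to0^+}f(t)/t=\alpha$, while $b\le f(t)$ for $t$ large forces $b\le\lim_{t\to+\infty}f(t)=\beta$.

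The first family will make $a(T)$ approach $\alpha$. This is the easy limit: since $f(t)/t\to\alpha$, for any $\epsilon>0$ one can find $\delta>0$ with $f(t)/t\ge\alpha-\epsilon$ on $(0,\delta]$, whence $g(T)\ge\alpha-\epsilon$ for every $T\le\delta$. Thus choosing a sequence $T_k\downarrow0$ gives $a(T_k)\to\alpha$ and produces the first sub-sequence $(a(T_k),b(T_k))$, indexed, say, by odd integers.

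The core step is to show that $\sup_{T>0}T\,g(T)=\beta$, which yields the second family. Given $\beta'<\beta$, fix $c\in(\beta',\beta)$ and, using $\lim_{t\to+\infty}f(t)=\beta$, pick $T_0>0$ with $f(T_0)\ge c$. I would then estimate $g(T)$ for $T\ge T_0$ by splitting the infimum into $(0,T_0]$ and $[T_0,T]$: on the first interval the infimum equals some strictly positive constant $m_0$ (positivity follows because $f(t)/t\to\alpha>0$ as $t\to0^+$ and, by lower semicontinuity and monotonicity of $f$, the quantity $f(t)/t$ attains a positive minimum on each compact sub-interval of $(0,T_0]$); on the second interval monotonicity gives $f(t)\ge f(T_0)\ge c$ and so $f(t)/t\ge c/T$. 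Hence $g(T)\ge\min(m_0,c/T)$, and for every $T\ge\max(T_0,c/m_0)$ we obtain $T\,g(T)\ge c>\beta'$. Since $\beta'<\beta$ was arbitrary and $T\,g(T)\le f(T)\le\beta$, one concludes $\sup_{T>0}T\,g(T)=\beta$. A sequence $T_k'$ with $T_k'\,g(T_k')\to\beta$ then provides the second sub-sequence, indexed by even integers, whose $b$-values approach $\beta$.

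Interleaving the two sub-sequences produces $(a_i,b_i)$ with $\min\{a_it,b_i\}\le f(t)$ for all $i$ and $t$, and with $\sup_ia_i=\alpha$, $\sup_ib_i=\beta$. The only slightly delicate step is the lower bound $\sup_TT\,g(T)\ge\beta$, which is where the hypothesis $\lim_{t\to+\infty}f(t)=\beta$ must be combined with the positivity of $m_0$ (and hence, implicitly, with $\lim_{t\to0^+}f(t)/t=\alpha$); the rest of the argument is essentially the monotonicity of $f$ and the definition of $g$.
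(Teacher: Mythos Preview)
Your proof is correct and follows essentially the same strategy as the paper: both parametrize by a threshold $T$ and set the slope equal to $\inf_{t\in(0,T]} f(t)/t$, so that $a\,t\le f(t)$ on $(0,T]$ and monotonicity handles $t>T$. The only difference is in the choice of the plateau: the paper takes $b=f(T)$ (with $T$ ranging over the positive rationals $k/h$), which makes $\sup_i b_i=\beta$ immediate from $\lim_{t\to+\infty}f(t)=\beta$; you take $b=T\,g(T)\le f(T)$, which forces you to run the extra argument with $m_0$ and $c/T$ to recover $\sup_T T\,g(T)=\beta$. Your argument for this is fine (the positivity of $m_0$ is exactly where lower semicontinuity and $\alpha>0$ enter), but the paper's choice of $b$ buys a shorter proof with no loss.
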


\begin{proof}
For all $(h, k) \in \mathbb{N}^2$ set
\[
a_{hk}:=\min\left\{\frac{f(t)}t \,: \quad t \in \left[0,\frac kh\right]\right\}, \quad b_{hk}:=f\left(\frac kh\right)\,.
\]
Above the function $\frac{f(t)}t$ is extended by continuity with the value $\alpha$ for $t=0$.
We clearly have that $a_{hk}\le \alpha$ for all $(h, k) \in \mathbb{N}^2$; furthermore $\alpha$ is an accumulation point for the family $a_{hk}$, hence
$\alpha=\sup\{a_{hk}: (h, k) \in \mathbb{N}^2\}$. With the monotonicity of $f$ we have  $b_{hk}\le \beta$ for all $(h, k) \in \mathbb{N}^2$; furthermore $\beta$ is an accumulation point for the family $b_{hk}$, hence
$\beta=\sup\{b_{hk}: (h, k) \in \mathbb{N}^2\}$. 

By construction we have
\[
a_{hk} t \le f(t) \mbox{ for all } t \in \left[0,\frac kh\right],\quad b_{hk} \le f(t) \mbox{ for all } t \in \left[\frac kh, +\infty\right),
\]
so that $\min\{a_{hk} t, b_{hk}\}\le f(t)$ for all $(h,k) \in \mathbb{N}^2$ and $t \in \R$. It then simply suffices to consider an enumeration of $\mathbb{N}^2$ to conclude the proof.
\end{proof}

\subsection{{$\Gamma$\hbox{-}convergence }}

{We recall here the definition of $\Gamma$\hbox{-}convergence for families of functionals depending on a real parameter. According to \cite{BraidesG}, we treat $\Gamma$\hbox{-}limits of functionals $F_\varepsilon:X\to[-\infty,+\infty]$ as $\varepsilon\to0^+$. The definition is an extension of that given for sequences of functionals labelled by a discrete parameter (see, e.g., \cite{DM93}), as we require all the properties to hold for every positive sequence $(\varepsilon_j)$ converging to $0$.}

{For all $u\in X$, we define the \emph{lower $\Gamma$\hbox{-}limit of $(F_\varepsilon)$} as $\varepsilon\to0^+$ by
\begin{equation}
F'(u):=\inf \left\{\displaystyle\mathop{\lim\inf}_{j\to+\infty}F_{\varepsilon_j}(u_j):\,\, \varepsilon_j\to0^+\,,\,\, u_j\to u\right\}\,,
\label{eq:Gammaliminf}
\end{equation}
and the \emph{upper $\Gamma$\hbox{-}limit of $(F_\varepsilon)$} as $\varepsilon\to0^+$ by
\begin{equation}
F''(u):=\inf \left\{\displaystyle\mathop{\lim\sup}_{j\to+\infty}F_{\varepsilon_j}(u_j):\,\, \varepsilon_j\to0^+\,,\,\, u_j\to u\right\}\,.
\label{eq:limsupchar}
\end{equation}
We then say that $(F_\varepsilon)$ \emph{$\Gamma$\hbox{-}converges to $F:X\to[-\infty,+\infty]$ as $\varepsilon\to0^+$} iff 
\begin{equation*}
F(u)=F'(u)=F''(u)\,,\quad \mbox{for all $u\in X$.}
\end{equation*}}

\subsection{A one-dimensional $\Gamma$\hbox{-}convergence result}

We recall here a one-dimensional $\Gamma$\hbox{-}convergence result which will be useful in the sequel. In the statement below, functions in $L^1(I)$ with $I \subset \R$ are extended by $0$ outside $I$, so that the functionals $H_\varepsilon$ are well-defined (actually, the result is not affected by the considered extension).

\begin{thm}\label{thm:braides}
Let $p>1$, let $I$ be a bounded interval in $\R$  and consider a lower semicontinuous increasing function {$f:[0,+\infty)\to[0,+\infty)$} such that there exist $\alpha, \beta>0$ with 
\[
\lim_{t\to 0^+}\frac{f(t)}t=\alpha, \quad \lim_{t\to +\infty} f(t)=\beta\,.
\]
Let $H_\varepsilon:L^1(I)\to[0,+\infty]$ be defined by
\begin{equation*}
H_\varepsilon(u):=\frac{1}{\varepsilon}\int_I f\left(\frac{1}{2}\int_{x-\varepsilon}^{x+\varepsilon}|u'(y)|^p\,\mathrm{d}y\right)\,\mathrm{d}x\,,
\end{equation*}
where it is understood that
\begin{equation*}
 f\left(\frac{1}{2}\int_{x-\varepsilon}^{x+\varepsilon}|u'(y)|^p\,\mathrm{d}y\right)=\beta
\end{equation*}
if $u\not\in W^{1,p}(x-\varepsilon,x+\varepsilon)$. Then the {functionals} $(H_\varepsilon)$ $\Gamma$\hbox{-}converge as $\varepsilon\to0^+$ to the functional
\begin{equation*}
H(u):=
\begin{cases}
\displaystyle \alpha\int_I|u'|^p\,\mathrm{d}t + 2\beta \#(J_u)\,, & \mbox{ if }u\in SBV(I)\,,\\
+\infty\,, & \mbox{ otherwise }
\end{cases}
\end{equation*}
in $L^1(I)$.
\end{thm}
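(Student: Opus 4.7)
I would prove the two $\Gamma$-convergence inequalities separately by the standard localization/truncation recipe. For the \emph{limsup}, given $u\in SBV(I)$, the natural candidate is the constant sequence $u_\varepsilon\equiv u$. Split $I$ into $I_\varepsilon^{\mathrm{j}}:=\{x\in I:(x-\varepsilon,x+\varepsilon)\cap J_u\neq\emptyset\}$ and its complement. On $I_\varepsilon^{\mathrm{j}}$ the integrand equals $\beta$ by convention, and $|I_\varepsilon^{\mathrm{j}}|\le 2\varepsilon\#(J_u)$ for $\varepsilon$ smaller than the minimum spacing of jumps, producing a contribution of at most $2\beta\#(J_u)$ (with equality in the limit). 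On the complement, $u\in W^{1,p}(x-\varepsilon,x+\varepsilon)$ and the argument of $f$ is roughly $\varepsilon|u'(x)|^p\to 0$; using $f(t)/t\to\alpha$ together with Fubini gives the term $\alpha\int_I|u'|^p$ in the limit. The case $\#(J_u)=\infty$ is handled by first approximating $u$ by $SBV$ functions with finitely many jumps and then diagonalizing.

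\textbf{Liminf set-up.} Let $u_\varepsilon\to u$ in $L^1(I)$ with $\liminf_\varepsilon H_\varepsilon(u_\varepsilon)<+\infty$. By Lemma~\ref{lem: belowapprox} one has $f(t)\ge\min\{a_h t,b_h\}$ with $\sup_h a_h=\alpha$, $\sup_h b_h=\beta$; by Lemma~\ref{lem: lemmasup}, applied to the set function $A\mapsto\liminf_\varepsilon H_\varepsilon(u_\varepsilon,A)$ which is superadditive on disjoint open sets, it suffices to prove, for each fixed $a<\alpha$, $b<\beta$ and each open $A\subset I$,
\begin{equation*}
\liminf_{\varepsilon\to 0}H_\varepsilon^{a,b}(u_\varepsilon,A)\;\ge\;a\int_A|u'|^p\,\mathrm{d}t\,+\,2b\,\#(J_u\cap A),
\end{equation*}
where $H_\varepsilon^{a,b}$ uses $\min\{at,b\}$ in place of $f$. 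Partition $A$ into the \emph{good set}
\begin{equation*}
G_\varepsilon:=\Big\{x\in A:u_\varepsilon\in W^{1,p}(x-\varepsilon,x+\varepsilon),\ \tfrac12\!\int_{x-\varepsilon}^{x+\varepsilon}|u'_\varepsilon|^p\,\mathrm{d}y\le b/a\Big\}
\end{equation*}
and the \emph{bad set} $B_\varepsilon:=A\setminus G_\varepsilon$, so that
\begin{equation*}
\varepsilon H_\varepsilon^{a,b}(u_\varepsilon,A)=\frac{a}{2}\int_{G_\varepsilon}\!\int_{x-\varepsilon}^{x+\varepsilon}|u'_\varepsilon(y)|^p\,\mathrm{d}y\,\mathrm{d}x+b\,|B_\varepsilon|.
\end{equation*}

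\textbf{Bulk, jump, and main obstacle.} Fubini rewrites the first term as $\frac{a}{2}\int_A|u'_\varepsilon(y)|^p\,|\{x\in G_\varepsilon:|x-y|<\varepsilon\}|\,\mathrm{d}y\ge a\int_{\widetilde G_\varepsilon}|u'_\varepsilon|^p$, where $\widetilde G_\varepsilon:=\{y:(y-\varepsilon,y+\varepsilon)\subset G_\varepsilon\}$. This produces an $L^p$-bounded, hence weakly compact, family of derivatives; its weak limit is identified with $u'$ on $A\setminus J_u$ by comparing with symmetric difference quotients of $\dashint_{y-\varepsilon}^{y+\varepsilon}u_\varepsilon$ in the spirit of Lemma~\ref{lem:lbp}, and lower semicontinuity of the $L^p$-norm then yields the bulk bound $a\int_A|u'|^p$. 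For the surface term, around each $t_0\in J_u\cap A$ of jump size $s\neq 0$, $L^1$-convergence forces $u_\varepsilon$ to vary by approximately $|s|$ across any symmetric interval $(x-\varepsilon,x+\varepsilon)\ni t_0$; if such a $u_\varepsilon$ were in $W^{1,p}$ there, H\"older's inequality would yield $\tfrac12\int_{x-\varepsilon}^{x+\varepsilon}|u'_\varepsilon|^p\ge|s|^p/(2(2\varepsilon)^{p-1})>b/a$ for $\varepsilon$ small, forcing $x\in B_\varepsilon$. Consequently $B_\varepsilon$ contains an approximate $2\varepsilon$-neighborhood of each jump, whence $b|B_\varepsilon|/\varepsilon\ge 2b\#(J_u\cap A)-o(1)$. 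The \emph{main obstacle} is precisely this jump-capturing step: converting the $L^1$-limit behavior of $u_\varepsilon$ into a pointwise oscillation estimate on $(x-\varepsilon,x+\varepsilon)$, which requires a careful Lebesgue-point/Fubini selection of good endpoints where $u_\varepsilon(x\pm\varepsilon)$ approximate $u^\mp(t_0)$ before H\"older can be invoked, and keeping the size of the captured $\varepsilon$-neighborhoods of distinct jumps from overlapping wastefully in the final count.
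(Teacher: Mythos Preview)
The paper does not prove this theorem; its entire proof is the one-line citation ``See \cite[Theorem~3.30]{Braides1}''. Your sketch is therefore an independent argument, and its architecture---recovery sequence $u_\varepsilon\equiv u$ for the limsup, reduction to $f(t)=\min\{at,b\}$ via Lemmas~\ref{lem: belowapprox} and~\ref{lem: lemmasup}, good/bad set decomposition, Fubini for the bulk, and neighbourhood capturing for the jumps---is the standard route to this one-dimensional result and is essentially correct.

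The one point needing care is that your liminf estimates refer to $J_u$ and to $u'$ on $A\setminus J_u$ as though $u\in SBV(I)$ were already known, whereas this membership must itself be extracted from $\liminf_\varepsilon H_\varepsilon(u_\varepsilon)<+\infty$. A clean fix mirrors the paper's own Proposition~\ref{prop:compactness}: impose the threshold on the slightly smaller interval $(x-(1-\delta)\varepsilon,x+(1-\delta)\varepsilon)$, so that a $\delta\varepsilon$-thickening of the resulting bad set $K_\varepsilon$ still has measure $\le C\varepsilon$; the one-dimensional coarea formula for $\mathrm{dist}(\cdot,K_\varepsilon)$ then yields an enlarged set $K'_\varepsilon\supset K_\varepsilon$ with $\#(\partial K'_\varepsilon)\le C/\delta$, hence finitely many components. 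Setting $\bar u_\varepsilon:=u_\varepsilon\chi_{I\setminus K'_\varepsilon}$ produces an $SBV$ sequence with $\int_I|\bar u'_\varepsilon|^p+\#(J_{\bar u_\varepsilon})$ uniformly bounded, and $SBV$-compactness gives $u\in SBV(I)$ together with the bulk lower bound by lower semicontinuity. After this, your neighbourhood-capturing argument for the surface term---and the endpoint-selection obstacle you correctly single out as the crux---go through as written.
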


\begin{proof}
 See \cite[Theorem~3.30]{Braides1}.
\end{proof}


\section{The non-local model and main results}\label{sec:model}
In this section we list our assumptions and introduce the main results of the paper.
Let $\Omega\subset\R^d$ be an open set with Lipschitz boundary, let $1< p<+\infty$ and $f:[0,+\infty)\to[0,+\infty)$ a lower semicontinuous, increasing function satisfying
\begin{equation}\label{eq: alfabeta}
\lim_{t\to 0^+}\frac{f(t)}t=\alpha>0, \quad \lim_{t\to +\infty} f(t)=\beta>0\,.
\end{equation} 
Let $W:\R^{d\times d}\to\R$ be a convex positive function on the subspace $\mathbb{M}^{d\times d}_{sym}$ of symmetric matrices, such that
\begin{equation}\label{eq:coerciv}
 W({\bf 0})=0\,,\quad c|M|^p\le W(M)\leq C(1+|M|^p)\,.
\end{equation}

For every $\varepsilon>0$ we consider the functional $F_\varepsilon: L^1(\Omega;\R^d)\to [0,+\infty]$ defined as
\begin{equation}
F_\varepsilon(u)=
\begin{cases}
\displaystyle\frac{1}{\varepsilon}\int_\Omega f\left(\varepsilon \dashint_{B_\varepsilon(x)\cap\Omega}W(\mathcal{E}u(y))\,\mathrm{d}y\right)\,\mathrm{d}x, & \mbox{ if }u\in W^{1,p}(\Omega;\R^d)\,,\\
+\infty\,, & \mbox{ otherwise on $L^1(\Omega;\R^d)$, }
\end{cases}
\label{energies0}
\end{equation}
where
\begin{equation*}
\dashint_{B}w(y)\,\mathrm{d}y := \frac{1}{\mathcal{L}^d(B)}\int_B w(y)\,\mathrm{d}y
\end{equation*}
for every Borel set $B\subseteq\Omega$ and for every $w\in L^1(B)$. 

We will deal with a localized version of the energies \eqref{energies0}. Namely, for every $A\subseteq\Omega$ open set, we will denote by $F_\varepsilon(u,A)$ the same functional as in \eqref{energies0} with the set $A$ in place of $\Omega$. \\

The following theorem is the  first main result of this paper.

\begin{thm}\label{thm:mainresult}
Under assumptions \eqref{eq: alfabeta} and \eqref{eq:coerciv}, it holds that
\begin{enumerate}
\begin{item}[{\rm (i)}]
there exists a constant $c_0$ independent of $\varepsilon$ such that, for all $(u_\varepsilon)\subset L^p(\Omega;\R^d)$ satisfying
$
F_\varepsilon(u_\varepsilon)
\leq C
$ {for every $\varepsilon>0$},
one can find a sequence $\overline{u}_\varepsilon \in {GSBV^p(\Omega;\R^d)}$ with
\[
\begin{split}
&\overline{u}_\varepsilon-u_\varepsilon \to 0 \mbox{ in measure  on }\Omega \\
&F_\varepsilon(u_\varepsilon)\geq c_0 \left(\int_{\Omega} W(\mathcal{E}\overline{u}_\varepsilon)\,\mathrm{d}x + 2 \mathcal{H}^{d-1}(J_{\overline{u}_\varepsilon} \cap \Omega) \right)\,.
\end{split}
\]
\end{item}
\begin{item}[{\rm (ii)}]
The {functionals} $(F_\varepsilon)$ $\Gamma$\hbox{-}converge, as $\varepsilon\to0$, to the functional
\begin{equation}
F(u)=
\begin{cases}
\displaystyle\alpha\int_\Omega W(\mathcal{E}u)\,\mathrm{d}x + 2\beta\,\mathcal{H}^{d-1}(J_u)\,\,, & \mbox{ if }u\in GSBD^p(\Omega)\cap L^1(\Omega;\R^d)\,,\\
+\infty\,, & \mbox{ otherwise on $L^1(\Omega;\R^d)$, }
\end{cases}
\label{limitfunct}
\end{equation}
with respect to the $L^1$ convergence in $\Omega$.
\end{item}
\end{enumerate}
\end{thm}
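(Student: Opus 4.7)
The plan is to prove both claims using the localization method for $\Gamma$-convergence, considering the localized functionals $F_\e(u, A)$ for $A \in \mathcal{A}(\Omega)$ and analyzing their lower and upper $\Gamma$-limits $F'(u, A)$, $F''(u, A)$.

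For the compactness statement (i), the central step will be a coarea-type construction. Setting $v_\e(x) := \e \dashint_{B_\e(x)\cap\Omega} W(\mathcal{E}u_\e(y))\,\mathrm{d}y$, the bound $F_\e(u_\e) \leq C$ combined with \eqref{eq: alfabeta} will give a useful control on $v_\e$. For a suitably chosen threshold $T$, I would produce a set $K'_\e$ containing $\{v_\e > T\}$ (up to a slight enlargement to account for balls of radius $(1-\delta)\e$) with $\mathcal{L}^d(K'_\e) \to 0$ and $\mathcal{H}^{d-1}(\partial K'_\e)$ uniformly bounded, extracted via the coarea formula applied to a smoothed version of $v_\e$. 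Then $\overline{u}_\e$ is defined by replacing $u_\e$ on $K'_\e$ with the constant $0$ (or with an average on a nearby good ball); this yields a $GSBV^p$ function with $J_{\overline{u}_\e} \subset \partial K'_\e$ and $\mathcal{E}\overline{u}_\e = \mathcal{E}u_\e$ on $\Omega \setminus K'_\e$, where $v_\e \leq T$ on admissible balls. The energy bound $F_\e(u_\e)\geq c_0(\int_\Omega W(\mathcal{E}\overline{u}_\e)\,\mathrm{d}x + 2\mathcal{H}^{d-1}(J_{\overline{u}_\e}\cap\Omega))$ will then follow, with $c_0 = c_0(T,\alpha,\beta)$.

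For the $\Gamma$-liminf estimate in (ii), I would split the analysis into a bulk and a surface lower bound as anticipated in \eqref{eq: core}. \emph{Bulk bound:} using Lemma \ref{lem: belowapprox} I reduce to $f(t) = \min\{a t, b\}$ for arbitrary $a < \alpha$, $b < \beta$. On $\Omega \setminus K'_\e$ one has $v_\e \leq b$, so $f(v_\e) = a v_\e$; by Fubini, together with the commutation property \eqref{commut} applied to the averages $\hat{u}_\e(x) = \dashint_{B_\e(x)} u_\e(y)\,\mathrm{d}y$ (whose convergence to $u$ in $L^1$ is provided by Lemma \ref{lem:lbp}(i)), one obtains $\liminf F_\e(u_\e, A) \geq a \int_A W(\mathcal{E}u)\,\mathrm{d}x$ via the convexity of $W$ and weak lower semicontinuity. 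Taking the supremum over admissible $(a,b)$ using Lemma \ref{lem: lemmasup} recovers the sharp constant $\alpha$. \emph{Surface bound:} fix $\xi \in S^{d-1}$; using Lemma \ref{lem:lbp}(ii) and Fubini on the adapted coordinates, one reduces to a Riemann-sum version of the one-dimensional functional of Theorem \ref{thm:braides}, whose $\Gamma$-liminf on $\mathcal{H}^{d-1}$-a.e.\ slice $y \in \Pi^\xi$ gives $2\beta \#(J_{u^{\xi, y}})$. Integrating over $\Pi^\xi$ and invoking the $GSBD$ slicing property (b) of Section \ref{sec: gsbd} produces $2\beta \int_{A \cap J_u} |\langle \nu_u, \xi \rangle|\,\mathrm{d}\mathcal{H}^{d-1}$. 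A further application of Lemma \ref{lem: lemmasup} with a countable dense family of $\xi$'s in $S^{d-1}$ recovers $2\beta \mathcal{H}^{d-1}(J_u \cap A)$, and one last use of the same lemma combines the two mutually singular bounds into the full $\Gamma$-liminf inequality. The $L^1$-convergence of $(u_\e)$ along $\Gamma$-converging sequences is ensured by combining (i) with Theorem \ref{th: GSDBcompactness} and Remark \ref{rem: compactness}.

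For the $\Gamma$-limsup, Theorem \ref{thm:density} and the Cortesani–Toader approximation reduce the construction to competitors $u \in \mathcal{W}(\Omega; \R^d)$ satisfying \eqref{1412191008}. I would take $u_\e := u$ itself and split $\Omega$ into a $c\e$-tubular neighbourhood $T_\e$ of $\overline{J}_u$ and its complement: off $T_\e$, $v_\e \to 0$ and $\e^{-1} f(v_\e) \to \alpha W(\mathcal{E}u)$ pointwise, with a dominating bound coming from \eqref{eq:coerciv}, so dominated convergence gives the bulk term; on $T_\e$, $v_\e$ blows up, $f(v_\e) \to \beta$, and $\e^{-1} \beta \mathcal{L}^d(T_\e) \to 2\beta \mathcal{H}^{d-1}(J_u)$ by the piecewise $C^1$ regularity of $\overline{J}_u$. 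The main obstacle is the simultaneous recovery of the sharp constants $\alpha$ and $\beta$ in the liminf — since $f$ is specified only through its asymptotic behaviour at $0$ and $+\infty$ — which is precisely why the truncated-affine approximation of Lemma \ref{lem: belowapprox} together with the supremum-of-measures Lemma \ref{lem: lemmasup} is crucial. A second genuine obstacle, absent in the $SBV$ setting of \cite{BraDal97}, is the lack of truncation tools in $GSBD^p$: this forces the coarea-based identification of $K'_\e$ and the use of averages on good balls in place of pointwise $L^\infty$ bounds.
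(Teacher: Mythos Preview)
Your plan for part (i) and for the $\Gamma$-liminf in (ii) follows the paper's strategy closely and is essentially correct: the excision of a bad set $K'_\e$ via the coarea formula, the use of averaged functions together with Jensen's inequality and the convexity of $W$ for the bulk bound, the slicing reduction to Theorem~\ref{thm:braides} for the surface bound, and the final combination through Lemma~\ref{lem: lemmasup} all match the paper. Two minor remarks: the paper applies coarea to the Lipschitz function $x\mapsto{\rm dist}(x,K_\e)$ rather than to a smoothed $v_\e$, which is cleaner; and on $\Omega\setminus K'_\e$ the correct threshold is $v_\e\le b/a$ (not $v_\e\le b$) for $f(v_\e)=av_\e$ to hold when $f(t)=\min\{at,b\}$.

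There is, however, a genuine gap in your $\Gamma$-limsup construction. You propose to take $u_\e:=u$ for $u\in\mathcal{W}(\Omega;\R^d)$, but any such $u$ with nonempty jump set fails to belong to $W^{1,p}(\Omega;\R^d)$, and by the definition \eqref{energies0} one has $F_\e(u)=+\infty$ for every $\e$; your splitting into $T_\e$ and its complement never starts. The heuristic ``$v_\e$ blows up on $T_\e$'' is also incorrect: $\mathcal{E}u$ denotes the approximate symmetric gradient, which lies in $L^p$ across $J_u$, so the averages $\e\dashint_{B_\e(x)}W(\mathcal{E}u)\,\mathrm{d}y$ remain bounded---the functional is simply declared $+\infty$ outside $W^{1,p}$. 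The paper repairs this by setting $u_\e:=u(1-\phi_\e)$, where $\phi_\e$ is a smooth cutoff equal to $1$ on a shrinking neighbourhood $K_{\gamma_\e}$ of $J_u$ and supported in $K_{\gamma_\e+\e}$. Then $u_\e\in W^{1,\infty}(\Omega;\R^d)$, $u_\e=u$ outside the tube, and on the tube the integrand is simply bounded by $\beta$, giving a contribution $\beta\,\mathcal{L}^d(K_{\gamma_\e+\e})/\e\to 2\beta\,\mathcal{H}^{d-1}(J_u)$. With this modification your intended dominated-convergence argument for the bulk part goes through.
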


Notice that there is a mismatch between part (i) and (ii) of the previous statement. Indeed, the compactness property in (i) does not entail the  $L^1$-convergence of a subsequence of $(u_\varepsilon)$. It only allows one to apply Theorem \ref{th: GSDBcompactness}, which has a weaker statement.
However, the $L^1$-convergence on the whole $\Omega$ can be easily enforced with the addition of a lower order fidelity term, as we have discussed in Remark \ref{rem: compactness}. This motivates the statement below.

There, we consider a continuous increasing function $\psi:[0,+\infty)\to[0,+\infty)$ such that
\begin{equation}\label{eq: hpsi}
\psi(0)=0,\quad \psi(s+t)\le C(\psi(s)+\psi(t)), \quad \psi(s)\le C(1+s^p), \quad \lim_{s\to+\infty}\frac{\psi(s)}{s}=+\infty
\end{equation} 
and set for every open set $A\subset\Omega$
\begin{equation}
G_\varepsilon(u, A)=
\begin{cases}
\displaystyle F_\varepsilon(u, A)+\int_A\psi(|u|)\,\mathrm{d}x, & \mbox{ if }u\in W^{1,p}(A;\R^d)\,,\\
+\infty\,, & \mbox{ otherwise on $L^1(A;\R^d)$. }
\end{cases}
\label{energies1}
\end{equation}
When $A=\Omega$, we simply write $G_\varepsilon(u)$ in place of $G_\varepsilon(u, \Omega)$. Then we have the following result.

\begin{thm}\label{thm:mainresult2}
Under assumptions \eqref{eq: alfabeta}, \eqref{eq:coerciv},  and \eqref{eq: hpsi} it holds that
\begin{enumerate}
\begin{item}[{\rm (i)}]
If $(u_\varepsilon)\subset L^p(\Omega;\R^d)$ is such that
$
G_\varepsilon(u_\varepsilon)
\leq C
$ {for every $\varepsilon>0$},
then $(u_\varepsilon)$ is compact in $L^1(\Omega;\R^d)$.
\end{item}
\begin{item}[{\rm (ii)}]
The {functionals} $(G_\varepsilon)$ $\Gamma$\hbox{-}converge, as $\varepsilon\to0$, to the functional
\begin{equation*}
G(u)=
\begin{cases}
\displaystyle F(u) + \!\!\int_\Omega\psi(|u|)\,\mathrm{d}x\,, \!\!& \mbox{ if }u\in GSBD^p(\Omega)\cap L^1(\Omega;\R^d)\,,\\
+\infty\,, & \mbox{ otherwise on $L^1(\Omega;\R^d)$, }
\end{cases}
\end{equation*}
with respect to the $L^1$ convergence in $\Omega$.
\end{item}
\end{enumerate}
\end{thm}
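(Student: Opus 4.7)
The plan is to reduce both parts of the theorem to Theorem \ref{thm:mainresult} by exploiting the superlinearity and the subadditivity/polynomial growth of $\psi$.

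\textbf{Part (i) - Compactness.} From the bound $G_\varepsilon(u_\varepsilon)\leq C$ I extract both $F_\varepsilon(u_\varepsilon)\leq C$ and $\int_\Omega \psi(|u_\varepsilon|)\,\mathrm{d}x\leq C$. Applying Theorem \ref{thm:mainresult}(i) yields an auxiliary sequence $\overline{u}_\varepsilon \in GSBV^p(\Omega;\R^d)$ with $\overline{u}_\varepsilon - u_\varepsilon \to 0$ in measure and uniformly bounded Griffith energy. Theorem \ref{th: GSDBcompactness} then provides, up to a subsequence, $\overline{u}\in GSBD^p(\Omega)$ with $\overline{u}_\varepsilon \to \overline{u}$ in measure on $\Omega\setminus A^\infty$ where $A^\infty$ has finite perimeter and $|\overline{u}_\varepsilon|\to +\infty$ on $A^\infty$. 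The convergence in measure of the differences transports blow-up, so $|u_\varepsilon|\to+\infty$ on $A^\infty$; the superlinearity of $\psi$ and Fatou's lemma combined with $\int_\Omega\psi(|u_\varepsilon|)\,\mathrm{d}x\leq C$ then force $\mathcal{L}^d(A^\infty)=0$. Hence $u_\varepsilon\to\overline{u}$ in measure on $\Omega$, and the uniform bound on $\int\psi(|u_\varepsilon|)$ with $\psi$ superlinear at infinity yields equi-integrability of $(u_\varepsilon)$ via the de la Vall\'ee Poussin criterion; Vitali's convergence theorem upgrades the convergence in measure to $L^1(\Omega;\R^d)$.

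\textbf{Part (ii) - $\Gamma$-liminf.} Given $u_\varepsilon \to u$ in $L^1$, I may assume $\liminf G_\varepsilon(u_\varepsilon)<+\infty$, extract a subsequence achieving the liminf, and extract a further subsequence converging a.e.\ on $\Omega$. The liminf for $F_\varepsilon$ is exactly the content of Theorem \ref{thm:mainresult}(ii), which yields $\liminf F_\varepsilon(u_\varepsilon)\geq F(u)$; the fidelity contribution is handled by Fatou's lemma, giving $\liminf \int_\Omega \psi(|u_\varepsilon|)\,\mathrm{d}x\geq \int_\Omega \psi(|u|)\,\mathrm{d}x$ by continuity of $\psi$. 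Adding the two estimates gives $\liminf G_\varepsilon(u_\varepsilon)\geq G(u)$.

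\textbf{Part (ii) - $\Gamma$-limsup.} For $u\in GSBD^p(\Omega)\cap L^1(\Omega;\R^d)$ with $G(u)<+\infty$, I use the recovery sequence $(u_\varepsilon)$ from Theorem \ref{thm:mainresult}(ii), which satisfies $u_\varepsilon\to u$ in $L^1$ and $F_\varepsilon(u_\varepsilon)\to F(u)$. Its construction proceeds via the approximation of $u$ by piecewise-smooth $SBV$ functions supplied by Theorem \ref{thm:density} and the Cortesani-Toader result, for which the fidelity control \eqref{eq:densityfidel} holds. Hence, arguing by a diagonal argument and using the subadditivity $\psi(s+t)\leq C(\psi(s)+\psi(t))$ together with the absolute continuity of $\int\psi(|u|)\,\mathrm{d}x$, one sees that $\psi(|u_\varepsilon|)$ is uniformly integrable and converges a.e.\ (up to subsequence) to $\psi(|u|)$, so Vitali gives $\int\psi(|u_\varepsilon|)\,\mathrm{d}x \to \int\psi(|u|)\,\mathrm{d}x$, and thus $G_\varepsilon(u_\varepsilon)\to G(u)$.

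\textbf{Main obstacle.} The genuinely delicate point is the transfer from Theorem \ref{thm:mainresult}(i) - whose compactness allows a nontrivial divergence set $A^\infty$ and only gives convergence in measure - to full $L^1$-compactness on the whole of $\Omega$. The fidelity bound is attached to $u_\varepsilon$ rather than to the surrogate $\overline{u}_\varepsilon$, so one must argue that the blow-up on $A^\infty$ survives the passage through the convergence in measure between $u_\varepsilon$ and $\overline{u}_\varepsilon$; the superlinearity of $\psi$ is exactly what closes this gap and simultaneously secures equi-integrability.
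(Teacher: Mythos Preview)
Your argument is correct, and the overall strategy---reducing Theorem~\ref{thm:mainresult2} to Theorem~\ref{thm:mainresult} plus control of the fidelity term---is close in spirit to what the paper does, though the paper organizes things differently: it proves the $G_\varepsilon$-statements in parallel with the $F_\varepsilon$-statements inside Propositions~\ref{prop:compactness}, \ref{prop:lowerbound}, and~\ref{prop:upperbound}, rather than deducing one theorem from the other.

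The one place where your black-box reduction is genuinely less direct is Part~(i). The paper's Proposition~\ref{prop:compactness}(ii) exploits the explicit form of the surrogate sequence, namely $\overline{u}_\varepsilon = u_\varepsilon \chi_{A\setminus K'_\varepsilon}$, so that $\psi(|\overline{u}_\varepsilon|)\leq \psi(|u_\varepsilon|)$ pointwise and Remark~\ref{rem: compactness} applies immediately to $\overline{u}_\varepsilon$, giving $A^\infty=\emptyset$ without any transfer argument. Your route---transporting the blow-up on $A^\infty$ from $\overline{u}_\varepsilon$ to $u_\varepsilon$ via an a.e.\ convergent subsequence of the differences, then invoking Fatou and superlinearity---is valid, but requires an extra subsequence extraction and is what you yourself flag as the ``main obstacle''. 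Knowing the construction of $\overline{u}_\varepsilon$ dissolves that obstacle entirely.

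For the $\Gamma$-limsup you correctly observe that Theorem~\ref{thm:mainresult}(ii) cannot be used as a pure black box: one must look inside the recovery construction (density via Theorem~\ref{thm:density} with \eqref{eq:densityfidel}, followed by a cutoff satisfying $|u_\varepsilon|\leq |u|$) to verify convergence of the fidelity term. Once you do that, your argument coincides with the paper's Proposition~\ref{prop:upperbound}, so the apparent reduction collapses back into the paper's parallel treatment. The $\Gamma$-liminf part is identical to the paper's (superadditivity of $\liminf$ plus Fatou on the fidelity term, as in Proposition~\ref{prop:boundsp}).
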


\begin{oss}
Existence of minimizers for the functional $G$, and also for $F$ if coupled with a Dirichlet datum, directly follows from Theorem \ref{th: GSDBcompactness} (see \cite{CC18Comp} for details). For fixed $\e$, the functionals $F_\e$ and $G_\e$ are lower semicontinuous in $W^{1,p}(\Omega; \R^d)$, but clearly not coercive. However, as done in \cite[Corollary 3.2]{BraDal97}, one can perturb $f$ with a sequence $(f_\e)$ of functions having linear growth at infinity, and satisfying $f(t)\le f_\e(t)\le f(t)+a_\e t$ for a sequence $a_\e=o(\e)$ as $\e\to0$ and still recover a $\Gamma$-convergence result. 

Their argument would also apply to the present situation: notice that only the $\Gamma$-limsup inequality has to be adapted, and this is straightforward in the space of regular approximating functions provided by Theorem \ref{thm:density}. We omit the details of this generalization. If we now replace $f$ with $f_\e$, existence of minimizers for $G_\e$ in $W^{1,p}$ can be obtained via the direct method. Then, Theorem \ref{thm:mainresult2} (ii) also gives convergence of the minimizers to a minimizer of $G$ in $GSBD^p(\Omega)$.
\end{oss}


\section{Compactness}\label{sec:compactness}

With the following proposition, we prove the compactness statements in Theorem~\ref{thm:mainresult}(i),  and Theorem~\ref{thm:mainresult2} (i), respectively. 
\begin{prop}\label{prop:compactness}
Let $A\subset\Omega$ be any open subset of $\Omega$, and let $F_\varepsilon$, $G_\varepsilon$ be defined as in \eqref{energies0}, and \eqref{energies1}, respectively. Then:
\begin{enumerate}
\begin{item}[{\rm (i)}]
Assume \eqref{eq: alfabeta}, \eqref{eq:coerciv}. If $(u_\varepsilon)\subset L^p(\Omega;\R^d)$ is such that
$
F_\varepsilon(u_\varepsilon, A)
\leq C
$ {for every $\varepsilon>0$},
one can find a sequence $\overline{u}_\varepsilon \in {GSBV^p(A;\R^d)}$ with
\[
\begin{split}
&\overline{u}_\varepsilon-u_\varepsilon \to 0 \mbox{ in measure  on }A \\
&F_\varepsilon(u_\varepsilon, A)\geq c_0 \left(\int_{A} W(\mathcal{E}\overline{u}_\varepsilon)\,\mathrm{d}x + 2 \mathcal{H}^{d-1}(J_{\overline{u}_\varepsilon} \cap A) \right)
\end{split}
\]
{for some $c_0>0$.}
\end{item}
\begin{item}[{\rm (ii)}]
Assume \eqref{eq: alfabeta}, \eqref{eq:coerciv}, and \eqref{eq: hpsi}. If $(u_\varepsilon)\subset L^p(\Omega;\R^d)$ is such that \, \,
$
G_\varepsilon(u_\varepsilon, A)
\leq C
$ {for every $\varepsilon>0$},
then $(u_\varepsilon)$ is compact in $L^1(A;\R^d)$.
\end{item}
\end{enumerate}
\end{prop}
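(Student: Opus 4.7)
I plan to extract from the bound $F_\varepsilon(u_\varepsilon,A)\le C$ a companion sequence $\overline{u}_\varepsilon\in GSBV^p(A;\R^d)$ obtained by setting $u_\varepsilon$ to zero on a small ``bad region'' $K'_\varepsilon$ where the averaged density concentrates. Part (ii) will then follow by combining this reduction with Theorem~\ref{th: GSDBcompactness}.

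\emph{Part (i).} By Lemma~\ref{lem: belowapprox}, fix $a,b>0$ with $\min\{at,b\}\le f(t)$. Setting $h_\varepsilon(x):=\dashint_{B_\varepsilon(x)\cap\Omega}W(\mathcal{E}u_\varepsilon)\,\mathrm{d}y$ and $E_\varepsilon:=\{x\in A:\varepsilon h_\varepsilon(x)>b/a\}$, the basic split
\[
F_\varepsilon(u_\varepsilon,A)\;\ge\; a\int_{A\setminus E_\varepsilon}h_\varepsilon\,\mathrm{d}x+\frac{b}{\varepsilon}\mathcal{L}^d(E_\varepsilon)
\]
yields $\mathcal L^d(E_\varepsilon)\le M\varepsilon$ with $M:=F_\varepsilon(u_\varepsilon,A)/b$. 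The crucial step is the construction of $K'_\varepsilon\supset E_\varepsilon$ with $\mathcal L^d(K'_\varepsilon)\to 0$ and $\mathcal H^{d-1}(\partial K'_\varepsilon)\le CM$. Introducing the super-level sets of the averages $\tilde h_\varepsilon$ on the smaller balls $B_{(1-\delta)\varepsilon}$, the elementary inclusion $B_{(1-\delta)\varepsilon}(y)\subset B_\varepsilon(x)$ for $|x-y|\le\delta\varepsilon$ gives the Minkowski containment $\tilde E_\varepsilon+B_{\delta\varepsilon}\subset E_\varepsilon$. Applying the coarea formula to a suitable BV function built from these averages (or, equivalently, to the convolution $\phi(z):=\mathcal L^d(E_\varepsilon\cap B_\varepsilon(z))$, whose total variation is $\le CM\varepsilon^d$ by Fubini) one selects a favorable threshold $t^{*}$ such that the super-level set $\{\phi>t^{*}\}$ has perimeter $\le CM$ and measure $\le CM\varepsilon$; the union of this super-level set with $E_\varepsilon$ (which is absorbed in the same bounds via a coarea choice of threshold for $\tilde h_\varepsilon$) provides the desired $K'_\varepsilon$.

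Define $\overline u_\varepsilon:=u_\varepsilon\mathbbm{1}_{A\setminus K'_\varepsilon}$; since $u_\varepsilon\in W^{1,p}$ and $K'_\varepsilon$ has finite perimeter, $\overline u_\varepsilon\in SBV^p(A;\R^d)\subset GSBV^p(A;\R^d)$ with $\mathcal E\overline u_\varepsilon=\mathcal Eu_\varepsilon\mathbbm{1}_{A\setminus K'_\varepsilon}$, $J_{\overline u_\varepsilon}\subset\partial K'_\varepsilon\cap A$, and $\overline u_\varepsilon-u_\varepsilon\to 0$ in measure by $\mathcal L^d(K'_\varepsilon)\to 0$. For the bulk estimate, by the choice of $K'_\varepsilon$ the density $\mathcal L^d(B_\varepsilon(z)\cap E_\varepsilon)\le(1-c')\omega_d\varepsilon^d$ holds for $z\in A\setminus K'_\varepsilon$, and a Fubini interchange in
$\int_{A\setminus E_\varepsilon}h_\varepsilon(x)\,\mathrm{d}x$ yields
\[
a\!\int_{A\setminus E_\varepsilon}\!h_\varepsilon\,\mathrm{d}x\ge ac'\!\int_{A\setminus K'_\varepsilon}\!W(\mathcal E u_\varepsilon)\,\mathrm{d}y=ac'\!\int_A\! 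W(\mathcal E\overline u_\varepsilon)\,\mathrm{d}y,
\]
the constant $c'>0$ absorbing the boundary effects from the Lipschitz set $\Omega$. Combining this with $\frac{b}{\varepsilon}\mathcal L^d(E_\varepsilon)\ge(b/C)\mathcal H^{d-1}(J_{\overline u_\varepsilon}\cap A)$ gives the required inequality with $c_0:=\min\{ac',b/(2C)\}$.

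\emph{Part (ii).} By $W(M)\ge c|M|^p$ and part (i), $\|\mathcal E\overline u_\varepsilon\|_{L^p(A)}+\mathcal H^{d-1}(J_{\overline u_\varepsilon}\cap A)\le C$. Since $|\overline u_\varepsilon|\le|u_\varepsilon|$ and $\psi$ is monotone, $\int_A\psi(|\overline u_\varepsilon|)\,\mathrm{d}x\le C$; the superlinearity of $\psi$, together with Theorem~\ref{th: GSDBcompactness} and Remark~\ref{rem: compactness}, then yields (up to a subsequence) $u\in GSBD^p(A)$ with $\overline u_\varepsilon\to u$ in $L^1(A;\R^d)$. Finally $u_\varepsilon-\overline u_\varepsilon\to 0$ in measure by part (i), and $(u_\varepsilon)$ is uniformly integrable by the de la Vall\'ee Poussin criterion applied to $\psi$, so Vitali's theorem gives $u_\varepsilon\to u$ in $L^1(A;\R^d)$. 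The main obstacle is in part (i): one must produce $K'_\varepsilon$ with perimeter scaling like $O(M)$ rather than a fixed constant, while simultaneously ensuring the uniform density estimate on $B_\varepsilon(z)\cap E_\varepsilon$ for $z\in A\setminus K'_\varepsilon$, which requires a careful combined use of the smaller-ball averages and the coarea formula at two appropriate scales.
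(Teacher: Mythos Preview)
Your general strategy --- truncate $u_\varepsilon$ to zero on a small ``bad'' set $K'_\varepsilon$ whose volume vanishes and whose perimeter stays bounded, then read off the bulk and surface bounds --- coincides with the paper's. Part (ii) is also handled the same way. However, your execution of the two key steps in part (i) differs from the paper's, and your write-up of the construction of $K'_\varepsilon$ is muddled.

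\emph{Construction of $K'_\varepsilon$.} The paper does not use the convolution $\phi$. Instead it takes $K_\varepsilon$ to be the super-level set of the \emph{smaller}-ball average $\psi_\varepsilon(x)=\varepsilon\dashint_{B_{(1-\delta)\varepsilon}(x)}W(\mathcal{E}u_\varepsilon)$ at height $C_\delta b/a$, so that (exactly your Minkowski observation) the $\delta\varepsilon$-tube $K''_\varepsilon=\{\,\mathrm{dist}(\cdot,K_\varepsilon)\le\delta\varepsilon\}$ is contained in your set $E_\varepsilon$ and hence has volume $\le (\varepsilon/b)F_\varepsilon$. Then a single application of the coarea formula to the $1$-Lipschitz function $g=\mathrm{dist}(\cdot,K_\varepsilon)$ on the shell $\{0<g<\delta\varepsilon\}\subset K''_\varepsilon$ selects a radius $\delta'_\varepsilon<\delta\varepsilon$ with $\mathcal{H}^{d-1}(\{g=\delta'_\varepsilon\})\le \frac{1}{\delta b}F_\varepsilon$, and one sets $K'_\varepsilon:=\{g\le\delta'_\varepsilon\}$. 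This avoids entirely the ``union with $E_\varepsilon$'' that you propose, which is dangerous: as written, taking a union would force you to control $\mathcal{H}^{d-1}(\partial E_\varepsilon)$, which you do not have. Your parenthetical fix via a second coarea on $\tilde h_\varepsilon$ is not carried out. In fact, your $\phi$-route alone (choosing $t^*\in(0,\tfrac12\omega_d\varepsilon^d)$ by coarea, \emph{without} any union) already gives the density estimate you need, so the union is simply unnecessary.

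\emph{Bulk estimate.} Here the approaches genuinely diverge. You rely on Fubini plus the density bound $\mathcal{L}^d(B_\varepsilon(z)\cap E_\varepsilon)\le(1-c')\omega_d\varepsilon^d$ for $z\notin K'_\varepsilon$; note that to conclude you also need $\mathcal{L}^d(A\cap B_\varepsilon(z))$ bounded below, which fails near $\partial A$ for a generic open $A$ (your appeal to the Lipschitz boundary of $\Omega$ does not help, since $A$ is arbitrary). The paper sidesteps this with a different mechanism: a finite covering argument (Remark~\ref{rem:innercovering}) shows that, even for $x\in K'_\varepsilon$, the smaller-ball average $\varepsilon(1-\delta)^d\dashint_{B_{(1-\delta)\varepsilon}(x)}W(\mathcal{E}\overline u_\varepsilon)$ is bounded by $Nb/a$ for a dimensional constant $N$; then the elementary inequality $\min\{at,b\}\ge(a/N)t$ on $[0,Nb/a]$ and a change of variables give the bulk bound directly on all of $A$.
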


\proof
Let $\varepsilon>0$ and $\delta\in(0,1)$ be fixed \footnote{For the purpose of this proof, one could fix $\delta=\frac12$ from the beginning: however, we prefer to work with {an} arbitrary $\delta$ as the first part of the construction will be used later on.}. It suffices to consider here only the case $f(t)=\min\{at, b\}$ with $a, b>0$. In the general case one can indeed find $a, b>0$ with $f(t)\geq \min\{at, b\}$ for all $t$, using Lemma  \ref{lem: belowapprox}, and deduce the result \emph{a fortiori}.
Hence, let us assume $f(t)=\min\{at, b\}$. We define
\begin{equation}
C_\delta:=\frac{\mathcal{L}^d(B_\varepsilon(0))}{\mathcal{L}^d(B_{(1-\delta)\varepsilon}(0))}=\frac{1}{(1-\delta)^d}
\label{eq:cdelta}
\end{equation}
and the function
\begin{equation*}
\psi_\varepsilon(x):=\varepsilon \dashint_{B_{(1-\delta)\varepsilon}(x)\cap\Omega}W(\mathcal{E} u_\varepsilon(y))\,\mathrm{d}y\,.
\end{equation*}
Correspondingly, we introduce the compact set
\begin{equation}
K_\varepsilon:=\left\{x\in A:\,\, \psi_\varepsilon(x)\geq  C_\delta\, \frac ba\right\}\,.
\label{eq:keps}
\end{equation}
The set $K_\varepsilon$ is actually also depending on the fixed $\delta$ (as well as the sets $K^{\prime \prime}_\varepsilon$ and $K'_\varepsilon$ used below) but we omit this dependence to ease notation. 
We first note that, setting
\begin{equation*}
K^{\prime \prime}_\varepsilon:=\{x\in A:\,\, {\rm dist}(x,K_\varepsilon)\le \delta\epsilon\}\,,
\end{equation*}
then it holds that
\begin{equation}\label{eq: inclusion}
K^{\prime \prime}_\varepsilon\subseteq \left\{x\in A:\,\, \varepsilon \dashint_{B_{\varepsilon}(x)}W(\mathcal{E}u_\varepsilon(y))\,\mathrm{d}y\geq \frac ba\right\}\,.
\end{equation}
Indeed, if $x\in K^{\prime \prime}_\varepsilon$ then $B_\varepsilon(x)\supseteq B_{(1-\delta)\varepsilon}(z)$ for some $z\in K_\varepsilon$, so
\begin{equation*}
\begin{split}
\varepsilon \dashint_{B_{\varepsilon}(x)}W(\mathcal{E} u_\varepsilon(y))\,\mathrm{d}y & \geq \varepsilon \frac{\mathcal{L}^d(B_{(1-\delta)\varepsilon}(z))}{\mathcal{L}^d(B_{\varepsilon}(x))}\dashint_{B_{(1-\delta)\varepsilon}(z)}W(\mathcal{E} u_\varepsilon(y))\,\mathrm{d}y\\
& =\frac{\psi_\varepsilon(z)}{C_\delta}\geq \frac ba\,.
\end{split}
\end{equation*}
Now, from the inclusion \eqref{eq: inclusion} and the fact that $f(t)= b$ for $t\geq \frac ba$, we deduce that
\begin{equation}
\mathcal{L}^d(K^{\prime \prime}_\varepsilon)\leq \frac{\varepsilon}{b} F_\varepsilon(u_\varepsilon, A)\,.
\label{stima1}
\end{equation}
Then, applying the coarea formula to the $1$-Lipschitz function \, $g(x):= \mathrm{dist}(x, K_\epsilon)$ (see for instance \cite[Theorem 3.14]{EvansGariepy92})  in the open set $\{0<g(x)<\delta \epsilon\}\subset K^{\prime \prime}_\varepsilon$ we get
\[
 \frac{\varepsilon}{b} F_\varepsilon(u_\varepsilon, A) \geq \mathcal{L}^d(K^{\prime \prime}_\varepsilon)\ge \int_{0}^ {\delta \epsilon}\mathcal{H}^{d-1}(\{g=t\})\,\mathrm{d}t\,. 
\]
It follows that we can choose $0<\delta^\prime_\e < \delta\e$ such that, for 
\begin{equation}
K'_\varepsilon:=\{x\in A:\,\, {\rm dist}(x,K_\varepsilon)\le \delta^\prime_\e\}\,,
\label{eq:kpeps}
\end{equation}
it holds
\begin{equation}
\mathcal{H}^{d-1}(\partial K'_\varepsilon)=\mathcal{H}^{d-1}(\{x\in A:\,\,{\rm dist}(x,K_\varepsilon)= \delta^\prime_\e\})\leq \frac{1}{\delta b}F_\varepsilon(u_\varepsilon,  A)\,.
\label{eq:boundmennucci}
\end{equation}
For every $\varepsilon>0$, we set
\begin{equation}
\overline{u}_\varepsilon (x)=
\begin{cases}
u_\varepsilon (x)\,, & \mbox{ if }x\in A\backslash K_\varepsilon'\,,\\
0\,, & \mbox{ otherwise. }
\end{cases}
\label{eq:veps}
\end{equation}
Note that from \eqref{stima1} an the bound $F_\varepsilon(u_\varepsilon,A)\leq C$ it follows that
\begin{equation}
\mathcal{L}^d(\{x\in A:\,\, \overline{u}_\varepsilon(x)\neq u_\varepsilon(x)\})\to0\,,
\label{eq:convinmeasure}
\end{equation}
whence $\overline{u}_\varepsilon-u_\varepsilon \to 0$ in measure  on $A$. We prove the following\\
\noindent
{\bf Claim:} there exists a constant $N>0$ depending only on $d$ such that
\begin{equation*}
\varepsilon(1-\delta)^d\dashint_{B_{(1-\delta)\varepsilon}(x)} W(\mathcal{E}\overline{u}_\varepsilon(y))\,\mathrm{d}y\leq N \frac ba
\end{equation*} 
for every $x\in A$.

For this, we first note that by definition of $\overline{u}_\varepsilon$, and  since $W(\mathbf{0})=0$ is the minimum value of $W$, one has $W(\mathcal{E}\overline{u}_\varepsilon(x)) \le W(\mathcal{E}u_\varepsilon(x))$ for a.e. $x$. Now, when $x\in A\backslash K'_\varepsilon$, it holds $x\not\in K_\varepsilon$ so that by definition of $K_\varepsilon$ we have
\begin{equation*}
\varepsilon(1-\delta)^d\dashint_{B_{(1-\delta)\varepsilon}(x)} W(\mathcal{E}\overline{u}_\varepsilon(y))\,\mathrm{d}y\le \varepsilon(1-\delta)^d\dashint_{B_{(1-\delta)\varepsilon}(x)} W(\mathcal{E}u_\varepsilon(y))\,\mathrm{d}y\leq\frac ba\,.
\end{equation*}

On the other hand, if $x\in K'_\varepsilon$, then Remark~\ref{rem:innercovering} shows the existence of a finite subset of $A\backslash K'_\varepsilon$, say $\{x_1,x_2,\dots,x_N\}$, where $N$ only depends on the dimension $d$, such that
\begin{equation}
(A\backslash K_\varepsilon')\cap B_{(1-\delta)\varepsilon}(x) \subseteq \bigcup_{i=1}^N B_{(1-\delta)\varepsilon}(x_i)\,.
\label{eq:covering}
\end{equation}
We then have, with \eqref{eq:veps} and  \eqref{eq:covering},
\begin{equation*}
\begin{split}
\int_{B_{(1-\delta)\varepsilon}(x)} W(\mathcal{E}\overline{u}_\varepsilon(y))\,\mathrm{d}y & = \int_{B_{(1-\delta)\varepsilon}(x)\cap (A\backslash K_\varepsilon')} W(\mathcal{E}u_\varepsilon(y))\,\mathrm{d}y\\
& \leq \sum_{i=1}^N\int_{B_{(1-\delta)\varepsilon}(x_i)} W(\mathcal{E}u_\varepsilon(y))\,\mathrm{d}y\\
& \leq \frac{N \mathcal{L}^d(B_{(1-\delta)\varepsilon})}{\varepsilon(1-\delta)^d}\frac{b}{a}\,,
\end{split}
\end{equation*}
where in the latter inequality we used the fact that the points $x_i\not\in K'_\varepsilon$. This concludes the proof of {the claim}.

Since $W\ge 0$ and $f$ is nondecreasing, we have the  estimate
\begin{equation}\label{eq: elementare}
F_\varepsilon(u_\varepsilon,A)\geq \frac{1}{\varepsilon}\int_Af\left(\varepsilon (1-\delta)^d \dashint_{B_{(1-\delta)\varepsilon}(x)}W(\mathcal{E}u_\varepsilon(y))\,\mathrm{d}y\right)\,\mathrm{d}x\,.
\end{equation} 
Moreover, if $t\le N\frac ba$, one has the elementary inequality $\min\{at, b\}\geq \frac a N t$. With this,  recalling that $W(\mathcal{E}\overline{u}_\varepsilon(x)) \le W(\mathcal{E}u_\varepsilon(x))$ for a.e. $x$, using the Claim, \eqref{eq: elementare} and  the monotonicity of $f$ we obtain the estimate
\begin{equation*}
\begin{split}
F_\varepsilon(u_\varepsilon,A)&\geq \frac{1}{\varepsilon}\int_Af\left(\varepsilon (1-\delta)^d\dashint_{B_{(1-\delta)\varepsilon}(x)}W(\mathcal{E}u_\varepsilon(y))\,\mathrm{d}y\right)\,\mathrm{d}x \\
& \geq \frac{1}{\varepsilon}\int_Af\left(\varepsilon (1-\delta)^d \dashint_{B_{(1-\delta)\varepsilon}(x)}W(\mathcal{E}\overline{u}_\varepsilon(y))\,\mathrm{d}y\right)\,\mathrm{d}x \\
& \geq \frac aN\int_A\left(\dashint_{B_{(1-\delta)\varepsilon}(x)}W(\mathcal{E}\overline{u}_\varepsilon(y))\,\mathrm{d}y\right)\,\mathrm{d}x \\
&=\frac {a}{N\omega_d}\int_{A\times B_1(0)}W(\mathcal{E}\overline{u}_\varepsilon(x+(1-\delta)\e z))\,\mathrm{d}x\,\mathrm{d}z
\end{split}
\end{equation*}
where we changed variables $y=x+(1-\delta)\e z$ and used Fubini's Theorem.  Since $W\ge 0$ with a further change of variables and using \eqref{eq:coerciv} we conclude
\begin{equation}
F_\varepsilon(u_\varepsilon,A)\geq \frac aN\int_{A}W(\mathcal{E}\overline{u}_\varepsilon(x))\,\mathrm{d}x\geq  \frac {ca}N\int_{A}\left|\mathcal{E}\overline{u}_\varepsilon(x)\right|^p\,\mathrm{d}x\,.
\label{eq:bulkbound}
\end{equation}

Since by definition \eqref{eq:veps} we have that $J_{\overline{u}_\varepsilon}=\partial K'_\varepsilon$, with \eqref{eq:boundmennucci}, \eqref{eq:bulkbound} and from the assumption $F_\varepsilon(u_\varepsilon, A)\leq C$ we deduce that ${\overline{u}_\varepsilon \in GSBV^p(A;\R^d)}$. Moreover, setting $c_0:=\frac{1}{2}\min\{\frac{a}{N}, \frac{b\delta}{2}\}$, we infer the lower bound
\begin{equation}
F_\varepsilon(u_\varepsilon, A)\geq c_0 \left(\int_{A} W(\mathcal{E}\overline{u}_\varepsilon)\,\mathrm{d}x + 2 \mathcal{H}^{d-1}(J_{\overline{u}_\varepsilon} \cap A) \right)\,.
\label{eq:bulk+surfbound}
\end{equation} 
Combining with \eqref{eq:convinmeasure}, this proves (i).

For what concerns (ii), notice that by \eqref{eq: hpsi} and \eqref{eq:veps} it holds \,\, $\psi(|\overline{u}_\varepsilon(x)|) \le \psi(|u_\e(x)|)$ for a.e. $x\in \Omega$. Since $F_\varepsilon\leq G_\varepsilon$, from $G_\varepsilon(u_\varepsilon,A)\leq C$ and \eqref{eq:bulk+surfbound} 
we infer that
\begin{equation*}
\int_{A}\psi(|\overline{u}_\varepsilon(x)|)\,\mathrm{d}x + \int_{A}|\mathcal{E}\overline{u}_\varepsilon(x)|^p\,\mathrm{d}x + \mathcal{H}^{d-1}(J_{\overline{u}_\varepsilon}\cap A)\le C<+\infty
\end{equation*}
for all $\e$. Thus, in view of the growth assumption \eqref{eq: hpsi} on $\psi$, by Theorem \ref{th: GSDBcompactness} and Remark \ref{rem: compactness}, the sequence $(\overline{u}_\varepsilon)$ is compact in $L^1(A;\R^d)$.  By \eqref{eq:convinmeasure} and the Vitali dominated convergence Theorem, we conclude that $(u_\varepsilon)$ is compact in $L^1(A;\R^d)$ as well.
\endproof

\section{Estimate from below of the $\Gamma$\hbox{-}limit} \label{sec:estimbelow}

\subsection{Estimate from below of the bulk term}\label{sec:estimbelowbulk}
We begin by giving a first estimate of the $\Gamma$-liminf of the functionals $F_\e$. This estimate is optimal (up to a small error) for the bulk part of the energy, while it is not, for what concerns the surface part. An optimal estimate for this term will be provided separately by means of a slicing argument (see Proposition \ref{prop:lowboundjump} below). As the two parts of the energy are mutually singular, the localization method of Lemma \ref{lem: lemmasup} will eventually allow us to get the $\Gamma$-liminf inequality.

\begin{prop}\label{prop:estimate}
Let $A$ be an open set with $A\subset\subset \Omega$, and consider a sequence $u_\e\in W^{1,p}(\Omega;\R^d)$ converging to $u$ in $L^1(\Omega; \R^d)$. Assume \eqref{eq: alfabeta} and \eqref{eq:coerciv}. Then, for every fixed $0<\delta<1$, there exist a constant $M_\delta$ only depending on $f$ and $\delta$ and a sequence of functions $(v_\varepsilon^\delta)\subset {GSBV^p(A;\R^d)}$ such that
\begin{itemize}
\item[{\rm (i)}] $\displaystyle \alpha(1-\delta)^{2d+1}\int_{A} W(\mathcal{E}v_\varepsilon^\delta(x))\,\mathrm{d}x\leq F_\varepsilon(u_\e,A)$;
\item[{\rm (ii)}] $\displaystyle \mathcal{H}^{d-1}(J_{v_\varepsilon^\delta})\leq M_\delta\, F_\varepsilon(u_\e,A)$;
\item[(iii)] $\displaystyle v_\varepsilon^\delta \to u$ in $L^1(A; \R^d)$ as $\varepsilon \to 0$.
\end{itemize} 
\end{prop}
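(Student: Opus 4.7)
The strategy is to reuse the closed set $K'_\varepsilon$ constructed in the proof of Proposition \ref{prop:compactness} in order to isolate an open region $A\setminus K'_\varepsilon$ where the density defining $F_\varepsilon$ sits on its linear branch, and to replace $u_\varepsilon$ there by its own spherical average over balls of radius $(1-\delta)\varepsilon$. By Lemma \ref{lem: belowapprox} one reduces to the truncated density $\min\{at,b\}$ for a pair $(a,b)$ with $a\geq\alpha(1-\delta)^{d+1}$ and $b=b(\delta,f)>0$ provided by the lemma. Running verbatim the first part of the proof of Proposition \ref{prop:compactness} with this choice of $(a,b)$ and with $\psi_\varepsilon(x):=\varepsilon\dashint_{B_{(1-\delta)\varepsilon}(x)\cap\Omega} W(\mathcal{E}u_\varepsilon(y))\,\mathrm{d}y$, one obtains a closed set $K'_\varepsilon\supset K_\varepsilon=\{\psi_\varepsilon\geq C_\delta b/a\}$ satisfying $\mathcal{L}^d(K'_\varepsilon)\to 0$ and $\mathcal{H}^{d-1}(\partial K'_\varepsilon)\leq\tfrac{1}{\delta b}F_\varepsilon(u_\varepsilon,A)$.

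Since $A\subset\subset\Omega$, for $\varepsilon$ small one has $B_{(1-\delta)\varepsilon}(x)\subset\Omega$ for every $x\in A$, and one may set
\[
v_\varepsilon^\delta(x):=\begin{cases}\displaystyle\dashint_{B_{(1-\delta)\varepsilon}(x)} u_\varepsilon(y)\,\mathrm{d}y, & x\in A\setminus K'_\varepsilon,\\[4pt] 0, & x\in K'_\varepsilon.\end{cases}
\]
The averaging-gradient commutation property recalled in Section \ref{sec:lemmas} gives $v_\varepsilon^\delta\in W^{1,p}(A\setminus K'_\varepsilon;\mathbb{R}^d)$ with $\mathcal{E}v_\varepsilon^\delta(x)=\dashint_{B_{(1-\delta)\varepsilon}(x)}\mathcal{E}u_\varepsilon(y)\,\mathrm{d}y$, so that $v_\varepsilon^\delta\in GSBV^p(A;\mathbb{R}^d)$ with $J_{v_\varepsilon^\delta}\subset\partial K'_\varepsilon$ up to $\mathcal{H}^{d-1}$-null sets. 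For (i) one observes that on $A\setminus K'_\varepsilon\subset A\setminus K_\varepsilon$ one has $(1-\delta)^d\psi_\varepsilon(x)<b/a$, so the truncated density is on its linear branch there; combining this with the pointwise inequality $\dashint_{B_\varepsilon(x)\cap\Omega} W(\mathcal{E}u_\varepsilon)\geq(1-\delta)^d\dashint_{B_{(1-\delta)\varepsilon}(x)} W(\mathcal{E}u_\varepsilon)$ (which follows from $W\geq 0$) and with the monotonicity of the truncation, one obtains
\[
F_\varepsilon(u_\varepsilon,A)\geq a(1-\delta)^d\int_{A\setminus K'_\varepsilon}\dashint_{B_{(1-\delta)\varepsilon}(x)} W(\mathcal{E}u_\varepsilon(y))\,\mathrm{d}y\,\mathrm{d}x,
\]
and Jensen's inequality for the convex function $W$, together with $W(\mathcal{E}v_\varepsilon^\delta)=W(\mathbf{0})=0$ on $K'_\varepsilon$, yields $F_\varepsilon(u_\varepsilon,A)\geq a(1-\delta)^d\int_A W(\mathcal{E}v_\varepsilon^\delta)\,\mathrm{d}x\geq\alpha(1-\delta)^{2d+1}\int_A W(\mathcal{E}v_\varepsilon^\delta)\,\mathrm{d}x$.

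Property (ii) is then immediate from the perimeter bound above, with $M_\delta:=1/(\delta b)$ depending only on $\delta$ and $f$. For (iii), Lemma \ref{lem:lbp}(i) applied with $w_\varepsilon=u_\varepsilon$ and $\eta_\varepsilon=(1-\delta)\varepsilon$ gives $L^1(A)$-convergence of the averages $\dashint_{B_{(1-\delta)\varepsilon}(\cdot)} u_\varepsilon$ to $u$, while $\mathcal{L}^d(K'_\varepsilon)\to 0$ combined with the absolute continuity of the integral of $|u|\in L^1(A)$ handles the contribution from $K'_\varepsilon$ where $v_\varepsilon^\delta$ vanishes. The main subtlety to keep track of is the coupling between $a$ and $b$ built into Lemma \ref{lem: belowapprox}, which prevents taking both $a$ close to $\alpha$ and $b$ close to $\beta$ simultaneously; this causes no trouble here, since (i) only needs $a$ close to $\alpha$, while (ii) allows any positive $b$ depending on $\delta$ and $f$.
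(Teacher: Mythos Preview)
Your proposal is correct and follows essentially the same approach as the paper: reduce to $f(t)=\min\{at,b\}$ via Lemma~\ref{lem: belowapprox}, import the sets $K_\varepsilon\subset K'_\varepsilon$ from the proof of Proposition~\ref{prop:compactness}, define $v_\varepsilon^\delta$ as the $(1-\delta)\varepsilon$-ball average of $u_\varepsilon$ outside $K'_\varepsilon$ and $0$ inside, and then use Jensen's inequality together with \eqref{commut} for~(i), the perimeter bound \eqref{eq:boundmennucci} for~(ii), and Lemma~\ref{lem:lbp}(i) with $\mathcal L^d(K'_\varepsilon)\to 0$ for~(iii). The one small deviation is in the chain for~(i): where the paper first invokes the concavity-type inequality $f(\lambda t)\ge \lambda f(t)$ (gaining a factor $1/C_\delta$) and then the elementary bound \eqref{eq: banale} (gaining a second $1/C_\delta$), you instead observe directly that $(1-\delta)^d\psi_\varepsilon(x)<b/a$ on $A\setminus K'_\varepsilon$, so that $\min\{a\cdot,b\}$ evaluated at $(1-\delta)^d\psi_\varepsilon(x)$ already sits on its linear branch, and monotonicity does the rest. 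This is a legitimate and slightly cleaner variant of the same computation; the constants match because you compensate by taking $a\ge \alpha(1-\delta)^{d+1}$ rather than the paper's $a\ge \alpha(1-\delta)$.
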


\proof
We divide the proof {into} two steps.

{\bf Step 1}: we first consider the case $f(t)=\min\{at,b\}$, with $a,b>0$. Observe that in this case the value $\alpha$ given by \eqref{eq: alfabeta} coincides exactly with $a$. We can clearly assume that 
\begin{equation}
\sup_{\varepsilon>0}F_\varepsilon(u_\varepsilon, A)\leq C\,,
\label{eq:equibounded}
\end{equation}
otherwise the assertion is immediate. Corresponding to the fixed $\delta>0$ and for every $\varepsilon>0$, we define the constant $C_\delta$, and the sets $K_\varepsilon$ and $K'_\varepsilon$ as in \eqref{eq:cdelta}, \eqref{eq:keps} and \eqref{eq:kpeps}, respectively. 
We define a sequence $(v_\varepsilon^\delta)$ of functions in ${GSBV^p(A;\R^d)}$ as
\begin{equation}
v_\varepsilon^\delta(x):=
\begin{cases}
\displaystyle\dashint_{B_{(1-\delta)\varepsilon}(x)} u_\varepsilon(y)\,\mathrm{d}y & \mbox{ if }x\in A\backslash K'_\varepsilon\,,\\
0 & \mbox{ otherwise. }
\end{cases}
\label{eq:vdelta}
\end{equation}
Then (iii) immediately follows from Lemma \ref{lem:lbp}(i) and the fact that, by construction and  \eqref{stima1}, it holds $\mathcal L^d(K^\prime_\e)\to 0$ when $\e \to 0$. 
We also have $\mathcal{H}^{d-1}(J_{v_\varepsilon^\delta})\le \mathcal{H}^{d-1}(\partial K'_\varepsilon)$, so that with \eqref{eq:boundmennucci} we deduce (ii) for $M_\delta=\frac1{\delta b}$.

To prove (i), we observe that, since $K_\e \subset K^\prime_\e$ and $A\subset\subset \Omega$, it holds
\[
\varepsilon \dashint_{B_{(1-\delta)\varepsilon}(x)}W(\mathcal{E} u_\varepsilon(y))\,\mathrm{d}y< C_\delta \frac ba
\]
for all $x \in A \setminus K^\prime_\e$. As $C_\delta>1$ and $f(t)=\min\{at,b\}$, we deduce the elementary inequality
\begin{equation}\label{eq: banale}
f\left(\varepsilon \dashint_{B_{(1-\delta)\varepsilon}(x)}W(\mathcal{E} u_\varepsilon(y))\,\mathrm{d}y\right) \geq \frac{a}{C_\delta} \varepsilon \dashint_{B_{(1-\delta)\varepsilon}(x)}W(\mathcal{E} u_\varepsilon(y))\,\mathrm{d}y
\end{equation}
for all $x \in A \setminus K^\prime_\e$. Now, since the function $f$ is concave and $f(0)=0$, 
\begin{equation}
f(\lambda t) \geq \lambda f(t)\,,\quad \forall\, \lambda\in[0,1]\,. 
\label{inequality2}
\end{equation}
With \eqref{eq: banale}, \eqref{inequality2}, the monotonicity of $f$, the convexity of $W$, \eqref{commut} and \eqref{eq:vdelta} we get
\begin{equation*}
\begin{split}
F_\varepsilon(u_\varepsilon,A) & \geq \frac{1}{\varepsilon}\int_{A\backslash K'_\varepsilon}f\left(\varepsilon \dashint_{B_\varepsilon(x)}W(\mathcal{E} u_\varepsilon(y))\,\mathrm{d}y\right)\,\mathrm{d}x \\
  & \geq \frac{1}{\varepsilon C_\delta}\int_{A\backslash K'_\varepsilon}f\left(\varepsilon \dashint_{B_{(1-\delta)\varepsilon}(x)}W(\mathcal{E} u_\varepsilon(y))\,\mathrm{d}y\right)\,\mathrm{d}x \\
& \geq \frac{a}{\varepsilon C_\delta^2}\int_{A\backslash K'_\varepsilon}\left(\varepsilon \dashint_{B_{(1-\delta)\varepsilon}(x)}W(\mathcal{E} u_\varepsilon(y))\,\mathrm{d}y\right)\,\mathrm{d}x\\
& \geq \frac{a}{C_\delta^2}\int_{A\backslash K'_\varepsilon}W\left(\dashint_{B_{(1-\delta)\varepsilon}(x)}\mathcal{E} u_\varepsilon(y)\,\mathrm{d}y\right)\,\mathrm{d}x\\
& = a(1-\delta)^{2d}\int_{A\backslash K'_\varepsilon}W(\mathcal{E}v_\epsilon^\delta(x))\,\mathrm{d}x = a(1-\delta)^{2d}\int_{A}W(\mathcal{E}v_\epsilon^\delta(x))\,\mathrm{d}x\,,
\end{split}
\end{equation*}
which implies assertion (i). This concludes the proof of Step 1.

{\bf Step 2}: for a general $f$ complying with \eqref{eq: alfabeta}, use Lemma \ref{lem: belowapprox} to find $a_\delta, b_\delta>0$ with $a_\delta\ge \alpha(1-\delta)$ and $f(t) \ge \min\{a_\delta t, b_\delta\}$ for all $t \in \R$, and perform the same construction as in the previous step. This gives (iii), (ii) (with $M_\delta:=\frac1{\delta b_\delta}$) and
\[
F_\varepsilon(u_\varepsilon,A)\geq a_\delta(1-\delta)^{2d}\int_{A}W(\mathcal{E}v_\epsilon^\delta(x))\,\mathrm{d}x\geq \alpha(1-\delta)^{2d+1}\int_{A}W(\mathcal{E}v_\epsilon^\delta(x))\,\mathrm{d}x\,,
\]
that is (i).
\endproof

\subsection{Estimate from below of the surface term}\label{sec:estimbelowsurf}

{For any $A\subset\Omega$ open set, we denote by $F'(u, A)$ the lower $\Gamma$\hbox{-}limit of $F_\varepsilon(u,A)$, as defined in \eqref{eq:Gammaliminf}.} 
We note that, since $F_\varepsilon (u,\cdot)$ is superadditive as a set function, the lower $\Gamma$\hbox{-}limit $F'(u,\cdot)$ inherits an analogous property; namely,
\begin{equation}
F'(u, A_1\cup A_2)\geq F'(u, A_1)+F'(u, A_2) \quad \mbox{ whenever }{A}_1\cap {A}_2=\emptyset\,.
\label{eq:superadditive}
\end{equation}

\begin{prop}\label{prop:lowboundjump} 
Assume \eqref{eq: alfabeta} and \eqref{eq:coerciv}. Let $\delta\in(0,1)$ be fixed, and consider a sequence $\e_j \to 0$. Let $A\subset\Omega$ be an open set, $u_j\in W^{1,p}(A;\R^d)$ converging to $u$ in $L^1(A; \R^d)$. Assume that 
\[
\mathop{\lim\inf}_{j\to+\infty}F_{\varepsilon_j}(u_j,A)<+\infty\,.
\]
Then $u \in GSBD^p(A)$ and 
\begin{equation}
\displaystyle\mathop{\lim\inf}_{j\to+\infty}F_{\varepsilon_j}(u_j,A)\geq 2\beta(1-\delta)\int_{J_{u}^\xi\cap A}|\langle \nu,\xi\rangle|\,\mathrm{d}\mathcal{H}^{d-1}
\label{eq:lowboundjump}
\end{equation}
for every $\xi\in S^{d-1}$.
\end{prop}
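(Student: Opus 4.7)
\emph{Compactness and $GSBD^p$ regularity of $u$.} Along a subsequence realizing the $\liminf$, Proposition~\ref{prop:compactness}(i) produces $\bar u_j\in GSBV^p(A;\R^d)$ with $\bar u_j-u_j\to 0$ in measure on $A$ and uniformly bounded $\|\mathcal{E}\bar u_j\|_{L^p(A)}+\mathcal{H}^{d-1}(J_{\bar u_j})$. Combined with the hypothesis $u_j\to u$ in $L^1$ and the $GSBD^p$-compactness Theorem~\ref{th: GSDBcompactness} (the exceptional set $A^\infty$ being trivial thanks to the $L^1$-bound, cf.\ Remark~\ref{rem: compactness}), this yields $u\in GSBD^p(A)$.

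\emph{Reduction of $f$ and cylinder decomposition.} Using Lemma~\ref{lem: belowapprox}, replace $f$ by the concave model $\min\{at,b\}$, with $a,b$ chosen so that an overall factor $(1-\delta)$ is absorbed upfront. Fix $\sigma\in(0,1)$ and insert, for every $x\in A$, the cylinder $C^\xi_{\sigma,\varepsilon}(x):=x+\varepsilon C^\xi_\sigma(0)\subset B_\varepsilon(x)$, of perpendicular radius $r:=\varepsilon\sqrt{1-\sigma^2}$ and $\xi$-half-length $\varepsilon\sigma$. Combining (i) positivity of $W$ to restrict to the cylinder, (ii) Fubini inside the cylinder together with the identification $\langle\mathcal{E}u(z+s\xi)\xi,\xi\rangle=\dot u^{\xi,z}(s)$ from property (a) of $GBD$ functions and the coercivity of $W$, (iii) monotonicity of $f$ and the superadditivity $f(\lambda t)\geq\lambda f(t)$ for $\lambda\leq 1$ (valid by concavity of $\min\{at,b\}$ with $f(0)=0$), and (iv) Jensen's inequality in the perpendicular variable for the concave $f$, yields the pointwise lower bound
\begin{equation*}
f\!\Bigl(\varepsilon\dashint_{B_\varepsilon(x)}W(\mathcal{E}u_j)\,dy\Bigr) \;\geq\; \kappa \dashint_{B^{d-1}_r(x_{\xi^\perp})}\! f\!\Bigl(\tfrac12\!\int_{x_\xi-\varepsilon\sigma}^{x_\xi+\varepsilon\sigma}|\dot u_j^{\xi,z}|^p\,ds\Bigr)\,d\mathcal{H}^{d-1}(z),
\end{equation*}
for a positive constant $\kappa$ depending on the coercivity constant of $W$ and on $\sigma$.

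\emph{Global Fubini, 1D $\Gamma$-liminf, and slicing of the jump set.} Integrating over $x\in A$ and exchanging the orders of the $x$- and $z$-integrations, one finds, for $A'\subset\subset A$ and $\varepsilon$ small enough that the perpendicular disks $B^{d-1}_r(x_{\xi^\perp})$ are contained in the corresponding slices of $A$,
\begin{equation*}
F_\varepsilon(u_j,A) \;\geq\; \kappa\sigma \int_{\Pi^\xi} H_{\varepsilon\sigma}\!\bigl(u_j^{\xi,z};\,A'_{\xi,z}\bigr)\,d\mathcal{H}^{d-1}(z),
\end{equation*}
where $H_{\varepsilon'}$ is the 1D functional of Theorem~\ref{thm:braides}. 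Since $u_j\to u$ in $L^1(A;\R^d)$, Fubini gives $u_j^{\xi,z}\to u^{\xi,z}$ in $L^1(A_{\xi,z})$ for $\mathcal{H}^{d-1}$-a.e.\ $z$. Applying Fatou's lemma and Theorem~\ref{thm:braides} slicewise (keeping only the surface term $2\beta\,\#(J_{u^{\xi,z}}\cap A'_{\xi,z})$), then invoking property (b) of $GBD$ to write $J_{u^{\xi,z}}=(J_u^\xi)_{\xi,z}$ for a.e.\ $z$, and finally the slicing identity for rectifiable sets
\begin{equation*}
\int_{\Pi^\xi}\!\#\!\bigl((J_u^\xi)_{\xi,z}\cap A'_{\xi,z}\bigr)\,d\mathcal{H}^{d-1}(z) \;=\; \int_{J_u^\xi\cap A'}|\langle\nu_u,\xi\rangle|\,d\mathcal{H}^{d-1},
\end{equation*}
one deduces \eqref{eq:lowboundjump} upon letting $A'\nearrow A$ and optimizing the geometric and functional parameters.

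\emph{Main obstacle.} The technical core is the chained pointwise estimate in Step~2: one must show that, by an appropriate choice of the cylinder aspect $\sigma$, the truncated-linear approximation of $f$ from Lemma~\ref{lem: belowapprox}, and the perpendicular Jensen step, the compounded scalar constant standing in front of the 1D functional can be pushed arbitrarily close to $1$, so that only the prescribed $(1-\delta)$ factor survives. Once this is in place, the transition from the slicewise 1D jump count to the $(d{-}1)$-dimensional weighted surface integral proceeds cleanly through the $GBD$ slicing property (b) and the coarea formula for rectifiable sets.
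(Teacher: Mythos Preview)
Your overall slicing strategy is sound, but the way you handle the multiplicative constants creates a gap that your ``Main obstacle'' paragraph does not resolve. After restricting to the cylinder and using the coercivity $W(M)\ge c|\langle M\xi,\xi\rangle|^p$, the factor that sits inside $f$ is
\[
\lambda_1=\frac{c\,\omega_{d-1}(1-\sigma^2)^{(d-1)/2}}{\omega_d},
\]
which depends on the \emph{fixed} coercivity constant $c$ of $W$ and, moreover, tends to $0$ as $\sigma\to 1$. By pulling this factor \emph{outside} $f$ via $f(\lambda t)\ge\lambda f(t)$ you obtain a prefactor $\kappa$ that cannot be pushed close to $1$ by any choice of $\sigma$ or of the truncated-linear approximation from Lemma~\ref{lem: belowapprox}. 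Hence the bound $\kappa\sigma\cdot 2\beta$ falls short of $2\beta(1-\delta)$, and the obstacle you flag is in fact insurmountable along this route.

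The fix is simple: do \emph{not} extract the constant. Keep it inside $f$ and apply Theorem~\ref{thm:braides} to the rescaled density $t\mapsto f(2\lambda_1 t)$, which is still lower semicontinuous, increasing, and satisfies $\lim_{t\to\infty}f(2\lambda_1 t)=\beta$. The one-dimensional $\Gamma$-limit then produces the surface term $2\beta$ regardless of $\lambda_1$, and the only prefactor surviving outside is the geometric $\sigma$, which you may take equal to $1-\delta$. With this correction (and after noting that $u_j^{\xi,z}\to u^{\xi,z}$ in $L^1$ for a.e.\ $z$ only along a further subsequence, which is harmless for a $\liminf$ estimate), your concave-Jensen route goes through.

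For comparison, the paper uses Jensen in the opposite direction: it applies Jensen for the \emph{convex} map $|\cdot|^p$ in the perpendicular variable, replacing $u_j$ by the perpendicularly averaged scalar functions
\[
w_j^{\xi,y}(t)=\dashint_{B^{d-1}_{r_\delta\varepsilon_j}(y)}\langle u_j(z+t\xi),\xi\rangle\,\mathrm{d}z,
\]
and then---exactly as above---keeps the resulting constant $c(d,\delta)$ inside $f$ and invokes Theorem~\ref{thm:braides} for $t\mapsto f(c(d,\delta)t)$. Convergence $w_j^{\xi,y}\to u^{\xi,y}$ in $L^1$ for a.e.\ $y$ is supplied by Lemma~\ref{lem:lbp}(ii). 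Thus both routes are valid once the constant stays inside $f$; the paper's averaged-function device works for general $f$ without first reducing to the concave model, while your direct-slice argument is conceptually lighter but requires the reduction via Lemma~\ref{lem: belowapprox}.
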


\proof
It follows from Proposition \ref{prop:compactness} and Theorem \ref{th: GSDBcompactness} that $u \in GSBD^p(A)$.
To prove \eqref{eq:lowboundjump}, we first note that, by virtue of the growth assumption \eqref{eq:coerciv}, we have
\begin{equation*}
W(\mathcal{E}u)\geq c |\mathcal{E}u|^p\geq c |\langle (\mathcal{E}u)\xi,\xi\rangle|^p\,,
\end{equation*}
for every $\xi\in S^{d-1}$. 
Thus, for every fixed $\xi$, since $f$ is non-decreasing, it will be sufficient to provide a lower estimate for the energies
\begin{equation}
F_{\varepsilon_j}^\xi(u_j,A) := \displaystyle\frac{1}{\varepsilon_j}\int_{A}f\left(\frac{c}{\omega_d\varepsilon_j^{d-1}} \int_{B_{{\varepsilon_j}}(x)}|\langle (\mathcal{E}u_j(z))\xi,\xi\rangle|^p\,\mathrm{d}z\right)\,\mathrm{d}x\,.
\label{eq:Gxi}
\end{equation}
We proceed by a slicing argument. If for each $x\in A$ we denote by $x_\xi$ and $x_{\xi^\perp}$ the projections of $x$ onto $\Xi$ and $\Pi^\xi$, respectively, we have
\begin{equation} 
\begin{split}
& F_{\varepsilon_j}^\xi(u_j,A)\\
 & =\displaystyle\int_{\Pi^\xi}\mathrm{d}\mathcal{H}^{d-1}(x_{\xi^\perp})\left(\frac{1}{\varepsilon_j}\int_{A_{\xi,x_{\xi^\perp}}}f\left(\frac{c}{\omega_d\varepsilon_j^{d-1}} \int_{B_{{\varepsilon_j}}(x)}|\langle (\mathcal{E}u_j(z))\xi,\xi\rangle|^p\,\mathrm{d}z\right)\,\mathrm{d}x_\xi\right) \\
 & \geq\displaystyle\int_{\Pi^\xi}\mathrm{d}\mathcal{H}^{d-1}(x_{\xi^\perp})\left(\frac{1}{\varepsilon_j}\int_{A_{\xi,x_{\xi^\perp}}}f\left(\frac{c}{\omega_d\varepsilon_j^{d-1}} \int_{C_{(1-\delta){\varepsilon_j}}^\xi(x)}|\langle (\mathcal{E}u_j(z))\xi,\xi\rangle|^p\,\mathrm{d}z\right)\,\mathrm{d}x_\xi\right)\,,
\end{split}
\label{stimone0}
\end{equation}
since by definition $C_{(1-\delta){\varepsilon_j}}^\xi(x)\subseteq B_{{\varepsilon_j}}(x)$.

We now set
\begin{equation*}
F_{\varepsilon_j}^{\xi,x_{\xi^\perp}}(u_j,A_{\xi,x_{\xi^\perp}}):= \frac{1}{\varepsilon_j}\int_{A_{\xi,x_{\xi^\perp}}}f\left(\frac{c}{\omega_d\varepsilon_j^{d-1}} \int_{C_{(1-\delta){\varepsilon_j}}^\xi(x)}|\langle (\mathcal{E}u_j(z))\xi,\xi\rangle|^p\,\mathrm{d}z\right)\,\mathrm{d}x_\xi\,.
\end{equation*}
For $r_\delta:=\sqrt{\delta(2-\delta)}$, recall that  $C_{(1-\delta){\varepsilon_j}}^\xi(x)=(x_\xi-(1-\delta)\epsilon_j,x_\xi+(1-\delta)\epsilon_j)\times B^{d-1}_{r_\delta\epsilon_j}(x_{\xi^\perp})$, and denote (with a slight abuse of notation) still with $z$ the $(d-1)$-dimensional variable in $B^{d-1}_{r_\delta\epsilon_j}(x_{\xi^\perp})$. Set \begin{equation*}
{w}_j^{\xi,x_{\xi^\perp}}(t):=\dashint_{B^{d-1}_{r_\delta\epsilon_j}(x_{\xi^\perp})}\langle u_j(z+t\xi)),\xi\rangle\,\mathrm{d}z\,.
\end{equation*}
By virtue of Lemma~\ref{lem:lbp}(ii), applied with $\eta_{\varepsilon_j}=r_\delta\varepsilon_j$, we have that ${w}_j^{\xi,x_{\xi^\perp}}$ converges to $u^{\xi,x_{\xi^\perp}}$ in $L^1(A_{\xi,x_{\xi^\perp}})$ for a.e. $x_{\xi^\perp}$. Furthermore, for $c(d,\delta):=\frac{c\omega_{d-1}r_\delta^{d-1}}{\omega_d}$, Fubini's Theorem, Jensen's inequality and the monotonicity of $f$ entail that
\begin{align}\label{stimon}
\begin{split}
&F_{\varepsilon_j}^{\xi,x_{\xi^\perp}}(u_j,A_{\xi,x_{\xi^\perp}})\\
&= \frac{1}{\varepsilon_j}\int_{A_{\xi,x_{\xi^\perp}}}f\left(\frac{c}{\omega_d\varepsilon_j^{d-1}} \int_{B^{d-1}_{r_\delta\epsilon_j}(x_{\xi^\perp})}\mathrm{d}z\int_{x_\xi-(1-\delta)\varepsilon_j}^{x_\xi+(1-\delta)\varepsilon_j}|\langle (\mathcal{E}u_j(z+t\xi))\xi,\xi\rangle|^p\,\mathrm{d}t\right)\,\mathrm{d}x_\xi\\
&= \displaystyle\frac{1}{\varepsilon_j}\int_{A_{\xi,x_{\xi^\perp}}}f\left(\frac{c}{\omega_d\varepsilon_j^{d-1}} \int_{x_\xi-(1-\delta)\varepsilon_j}^{x_\xi+(1-\delta)\varepsilon_j}\left(\int_{B^{d-1}_{r_\delta\epsilon_j}(x_{\xi^\perp})}|\langle (\mathcal{E}u_j(z+t\xi))\xi,\xi\rangle|^p\,\mathrm{d}z\right)\mathrm{d}t\right)\,\mathrm{d}x_\xi \\
& \geq \displaystyle\frac{1}{\varepsilon_j}\int_{A_{\xi,x_{\xi^\perp}}}f\left(\frac{c\omega_{d-1}r_\delta^{d-1}}{\omega_d} \int_{x_\xi-(1-\delta)\varepsilon_j}^{x_\xi+(1-\delta)\varepsilon_j}\left(\dashint_{B^{d-1}_{r_\delta\epsilon_j}(x_{\xi^\perp})}\langle (\mathcal{E}u_j(z+t\xi))\xi,\xi\rangle\,\mathrm{d}z\right)^p\mathrm{d}t\right)\,\mathrm{d}x_\xi\\
&= \displaystyle\frac{1}{\varepsilon_j}\int_{A_{\xi,x_{\xi^\perp}}}f\left({c(d,\delta)}\int_{x_\xi-(1-\delta)\varepsilon_j}^{x_\xi+(1-\delta)\varepsilon_j}|\dot{w}_j^{\xi,x_{\xi^\perp}}(t)|^p\,\mathrm{d}t\right)\,\mathrm{d}x_\xi\\
&= (1-\delta)\displaystyle\frac{1}{(1-\delta)\varepsilon_j}\int_{A_{\xi,x_{\xi^\perp}}}f\left({c(d,\delta)}\int_{x_\xi-(1-\delta)\varepsilon_j}^{x_\xi+(1-\delta)\varepsilon_j}|\dot{w}_j^{\xi,x_{\xi^\perp}}(t)|^p\,\mathrm{d}t\right)\,\mathrm{d}x_\xi\,,
\end{split}
\end{align}

Since the function $t\mapsto f(c(d, \delta)t)$ still tends to $\beta$ when $t\to +\infty$, applying Theorem~\ref{thm:braides} 
to the one-dimensional energies
\begin{equation*}
\widetilde{F}_{\epsilon_j}^{\xi,x_{\xi^\perp}}({w}_j^{\xi,x_{\xi^\perp}}, A_{\xi,x_{\xi^\perp}}):=\displaystyle\frac{1}{(1-\delta)\varepsilon_j}\int_{A_{\xi,x_{\xi^\perp}}}f\left({c(d,\delta)}\int_{x_\xi-(1-\delta)\varepsilon_j}^{x_\xi+(1-\delta)\varepsilon_j}|\dot{w}_j^{\xi,x_{\xi^\perp}}(t)|^p\,\mathrm{d}t\right)\,\mathrm{d}x_\xi
\end{equation*}
we deduce the lower bound
\begin{equation}
\mathop{\lim\inf}_{j\to+\infty} \widetilde{F}_{\epsilon_j}^{\xi,x_{\xi^\perp}}({w}_j^{\xi,x_{\xi^\perp}}, A_{\xi,x_{\xi^\perp}}) \geq  2\beta\#(J_{u^{\xi,x_{\xi^\perp}}}\cap A_{\xi,x_{\xi^\perp}})\,.
\label{stima1D}
\end{equation}
Consequently, from \eqref{stimon} and \eqref{stima1D} we obtain that
\begin{equation*}
\begin{split}
\mathop{\lim\inf}_{j\to+\infty} F_{\varepsilon_j}^{\xi,x_{\xi^\perp}}(u_j, A_{\xi,x_{\xi^\perp}})&\geq (1-\delta) \mathop{\lim\inf}_{j\to+\infty} \widetilde{F}_{\epsilon_j}^{\xi,x_{\xi^\perp}}({w}_j^{\xi,x_{\xi^\perp}}, A_{\xi,x_{\xi^\perp}})\\
& \geq 2\beta(1-\delta)\#(J_{u^{\xi,x_{\xi^\perp}}}\cap A_{\xi,x_{\xi^\perp}})\,.
\end{split}
\end{equation*}
Taking into account \eqref{stimone0}, with Fatou's Lemma we then have
\begin{equation*}
\begin{split}
\displaystyle\mathop{\lim\inf}_{j\to+\infty}F_{\varepsilon_j}(u_j,A)&\geq \mathop{\lim\inf}_{j\to+\infty}\displaystyle\int_{\Pi^\xi}F_{\varepsilon_j}^{\xi,x_{\xi^\perp}}(u_j, A_{\xi,x_{\xi^\perp}})\,\mathrm{d}\mathcal{H}^{d-1}(x_{\xi^\perp}) \\
& \geq \displaystyle\int_{\Pi^\xi} \left(\mathop{\lim\inf}_{j\to+\infty} F_{\varepsilon_j}^{\xi,x_{\xi^\perp}}(u_j, A_{\xi,x_{\xi^\perp}})\right)\, \mathrm{d}\mathcal{H}^{d-1}(x_{\xi^\perp}) \\
& \geq 2\beta(1-\delta)\displaystyle\int_{\Pi^\xi} \#(J_{u^{\xi,x_{\xi^\perp}}}\cap A_{\xi,x_{\xi^\perp}})\, \mathrm{d}\mathcal{H}^{d-1}(x_{\xi^\perp}) \\
&=2\beta(1-\delta)\int_{J_u^\xi\cap A}|\langle \nu_u,\xi\rangle|\,\mathrm{d}\mathcal{H}^{d-1} \,,
\end{split}
\end{equation*}
and the proof of \eqref{eq:lowboundjump} concludes.
\endproof

\subsection{Proof of the $\Gamma$\hbox{-}liminf inequality} \label{sec:gammaliminf}
We summarize the results of the previous sections  in the following Proposition. The $\Gamma$-liminf $G'$ of the sequence $(G_\e)$ is defined as in \eqref{eq:Gammaliminf}, with $G_\e$ in place of $F_\e$. {It holds that $G'(u, A)\geq F'(u, A)$ for each open subset $A\subset \Omega$ and $u \in L^1(A;\R^d)$ (see, e.g., \cite[Proposition~6.7]{DM93}).}
\begin{prop}\label{prop:boundsp}
Assume \eqref{eq: alfabeta}, \eqref{eq:coerciv},  and \eqref{eq: hpsi}. Consider $F_\e$, and $G_\e$ given by \eqref{energies0}, and \eqref{energies1}, respectively.  Let $u\in L^1(\Omega;\R^d)$ and let $A$ be an open subset of $\Omega$, and define $F'(u, A)$ and $G'(u,A)$ by \eqref{eq:Gammaliminf}. If $F'(u,A)<+\infty$, then $u\in GSBD^p(A)$ and
\begin{itemize}
\item[{\rm (i)}] $\displaystyle  F'(u,A)\geq \alpha \int_A W(\mathcal{E}u)\,\mathrm{d}x$\,,
\item[{\rm (ii)}] $\displaystyle G'(u,A)\geq F'(u,A)\geq 2\beta \int_{J_u^\xi\cap A}|\langle \nu_u,\xi\rangle|\,\mathrm{d}\mathcal{H}^{d-1}$
\end{itemize}
for every $\xi\in S^{d-1}$.  If it additionally holds $G'(u, A)<+\infty$, then one also has
\begin{itemize}
\item[{\rm (iii)}] $\displaystyle G'(u,A)\geq  \alpha \int_A W(\mathcal{E}u)\,\mathrm{d}x+ \int_A \psi(|u|)\,\mathrm{d}x$.
\end{itemize}
\end{prop}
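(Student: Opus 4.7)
My plan is to derive the three bounds separately along $\Gamma$-liminf-realizing sequences, reusing directly Propositions \ref{prop:estimate}, \ref{prop:lowboundjump} and the $GSBD^p$-compactness of Proposition \ref{prop:compactness} together with Theorem \ref{th: GSDBcompactness}.

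For (i), I would select $\epsilon_j \to 0$ and $u_j \to u$ in $L^1(A;\R^d)$ along which $F_{\epsilon_j}(u_j, A) \to F'(u, A)$. Fix $\delta \in (0,1)$ and invoke Proposition \ref{prop:estimate} to obtain $v_j^\delta \in GSBV^p(A;\R^d)$ satisfying
\[
\alpha(1-\delta)^{2d+1}\!\int_A W(\mathcal{E}v_j^\delta)\,\mathrm{d}x \le F_{\epsilon_j}(u_j, A), \quad \mathcal{H}^{d-1}(J_{v_j^\delta}) \le M_\delta F_{\epsilon_j}(u_j, A),
\]
together with $v_j^\delta \to u$ in $L^1(A;\R^d)$. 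Coercivity of $W$ from \eqref{eq:coerciv} gives uniform bounds on $\|\mathcal{E}v_j^\delta\|_p$ and $\mathcal{H}^{d-1}(J_{v_j^\delta})$. Theorem \ref{th: GSDBcompactness} then yields, up to subsequence, a $GSBD^p$-limit; since $v_j^\delta \to u$ in $L^1$, the exceptional set $A^\infty$ is empty and the limit must coincide with $u$, so $u \in GSBD^p(A)$ and $\mathcal{E}v_j^\delta \weakly \mathcal{E}u$ weakly in $L^p(A;\Mdd)$. Convexity of $W$ and standard lower semicontinuity of convex integrals give
\[
\alpha(1-\delta)^{2d+1}\!\int_A W(\mathcal{E}u)\,\mathrm{d}x \le \liminf_j \alpha(1-\delta)^{2d+1}\!\int_A W(\mathcal{E}v_j^\delta)\,\mathrm{d}x \le F'(u,A),
\]
and letting $\delta \to 0$ concludes (i).

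Item (ii) is essentially a corollary of the tools already proved. The inequality $G' \ge F'$ is immediate from $G_\epsilon \ge F_\epsilon$ on $W^{1,p}$. For the surface bound, I would pick a sequence realizing the liminf for $F'(u,A)$ and apply Proposition \ref{prop:lowboundjump} with an arbitrary $\delta\in(0,1)$ to obtain
\[
F'(u,A) \ge 2\beta(1-\delta)\int_{J_u^\xi\cap A}|\langle \nu_u,\xi\rangle|\,\mathrm{d}\mathcal{H}^{d-1},
\]
and then let $\delta \to 0$. The membership $u \in GSBD^p(A)$ in the assumption $F'(u,A)<+\infty$ is also guaranteed by that proposition.

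For (iii), I would pick a sequence $u_j \to u$ in $L^1(A;\R^d)$ with $G_{\epsilon_j}(u_j,A) \to G'(u,A) < +\infty$. Then both $F_{\epsilon_j}(u_j,A)$ and $\int_A \psi(|u_j|)\,\mathrm{d}x$ are equibounded. The proof of (i) applied verbatim to this specific sequence gives $\liminf_j F_{\epsilon_j}(u_j, A) \ge \alpha(1-\delta)^{2d+1}\int_A W(\mathcal{E}u)\,\mathrm{d}x$. Since $u_j\to u$ in $L^1$, a subsequence converges a.e., and Fatou's lemma gives $\liminf_j \int_A \psi(|u_j|)\,\mathrm{d}x \ge \int_A \psi(|u|)\,\mathrm{d}x$. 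Using $\liminf(a_j + b_j)\ge \liminf a_j + \liminf b_j$ and sending $\delta \to 0$ delivers (iii).

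The main obstacle, as already signalled in the paper's introduction, lies in item (i): identifying the weak $L^p$-limit of $(\mathcal{E}v_j^\delta)$ with $\mathcal{E}u$. The bare $GSBD^p$-compactness of Theorem \ref{th: GSDBcompactness} only produces convergence in measure off an a priori unknown exceptional set $A^\infty$ of finite perimeter; it is the $L^1$-convergence $v_j^\delta \to u$ provided by Proposition \ref{prop:estimate}(iii) (which ultimately rests on Lemma \ref{lem:lbp}) that rules out $A^\infty$ and forces the limit to be $u$. This is precisely why the $L^1$-topology is adopted in the statement of the main theorems.
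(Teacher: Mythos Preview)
Your proof is correct and follows essentially the same route as the paper: for (i) you use Proposition~\ref{prop:estimate} to produce the auxiliary sequence $(v_j^\delta)$, identify its limit with $u$ via $L^1$-convergence (so that $A^\infty=\emptyset$), and pass to the liminf in the bulk by convexity of $W$; for (ii) you invoke Proposition~\ref{prop:lowboundjump} and let $\delta\to0$. The only cosmetic difference is in (iii): the paper first shows $G'(u,A)\ge F'(u,A)+\int_A\psi(|u|)\,\mathrm{d}x$ (splitting the liminf and using Fatou) and then appeals to the already-proved (i), whereas you re-run the bulk argument on the $G'$-realizing sequence---both are valid and yield the same conclusion.
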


\proof
First we note that, by the definition of $\Gamma$\hbox{-}liminf \eqref{eq:Gammaliminf} and a diagonal argument, {there exist subsequences (not relabeled) $(u_j)$ and $(\hat{u}_j)$} converging to $u$ in $L^1(A;\R^d)$ such that
\begin{equation*}
F'(u,A)=\mathop{\lim\inf}_{j\to+\infty} F_{\varepsilon_j}(u_j,A)\,, \quad {G'(u,A)=\mathop{\lim\inf}_{j\to+\infty} G_{\varepsilon_j}(\hat{u}_j,A)}\,.
\end{equation*}
The first equality and Proposition \ref{prop:lowboundjump} give that, if $F'(u,A)<+\infty$, then $u\in GSBD^p(A)$. By the second one, the superadditivity of the liminf, Fatou's lemma and \eqref{eq:Gammaliminf}, we have
\begin{equation*}
\begin{split}
G'(u,A)&=\mathop{\lim\inf}_{j\to+\infty} {G_{\varepsilon_j}(\hat{u}_j,A)\geq \mathop{\lim\inf}_{j\to+\infty} F_{\varepsilon_j}(\hat{u}_j,A)+ \mathop{\lim\inf}_{j\to+\infty} \int_A \psi(|\hat{u}_j|)\,\mathrm{d}x} \\
&\geq F'(u,A)+\int_A \psi(|u|)\,\mathrm{d}x\,.
\end{split}
\end{equation*}
Hence, if (i) is proved, (iii) follows immediately.

We only have to confirm (i) and (ii). To this aim, let $\delta\in(0,1)$ be fixed. Then, by applying Proposition~\ref{prop:estimate} to the sequence $(u_j)$, there exists a sequence of functions $(v_j^\delta)\subset {GSBV^p(A;\R^d)}$, converging to $u$ in $L^1(A)$ as $\varepsilon_j \to 0$, such that
\begin{itemize}
\item[(a)] $\displaystyle(1-\delta)^{2d+1}\int_{A} W(\mathcal{E}v_j^\delta(x))\,\mathrm{d}x\leq F_{\varepsilon_j}(u_j,A)$;
\item[(b)] $\displaystyle \mathcal{H}^{d-1}(J_{v_j^\delta}\cap A)\leq M_\delta  F_{\varepsilon_j}(u_j,A)$.
\end{itemize} 
Combining (a) and (b) with the equiboundedness of $F_{\varepsilon_j}(u_j,A)$, one can apply the lower semicontinuity part of Theorem \ref{th: GSDBcompactness} to the sequence $(v_j^\delta)$. Taking into account that ${A^\infty}=\emptyset$ because $u\in L^1(A; \R^d)$, by the convexity of $W$ and \eqref{eq: GSBD comp}, (ii), we have
\begin{equation*}
\begin{split}
\alpha(1-\delta)^{2d+1}\int_{A} W(\mathcal{E}u(x))\,\mathrm{d}x & \leq \mathop{\lim\inf}_{j\to+\infty}\int_{A} W(\mathcal{E}v_j^\delta(x))\,\mathrm{d}x \\
& \leq \mathop{\lim\inf}_{j\to+\infty}F_{\varepsilon_j}(u_j,A)=F'(u, A)\,.
\end{split}
\end{equation*}
By letting $\delta\to0$ above we then obtain (i)

As for (ii), by Proposition~\ref{prop:lowboundjump}, in particular from \eqref{eq:lowboundjump}, we get
\[
2\beta(1-\delta)\int_{J_{u}^\xi\cap A}|\langle \nu_u,\xi\rangle|\,\mathrm{d}\mathcal{H}^{d-1} \leq \mathop{\lim\inf}_{j\to+\infty} F_{\varepsilon_j}(u_j,A)=F'(u,A)
\]
for every $\xi\in S^{d-1}$, so that (ii) follows by taking the limit as $\delta\to0$ again.
\endproof

We are now in a position to prove the $\Gamma$-liminf inequality.

\begin{prop}\label{prop:lowerbound}
Assume \eqref{eq: alfabeta}, \eqref{eq:coerciv},  and \eqref{eq: hpsi}. Consider $F_\e$, and $G_\e$ given by \eqref{energies0}, and \eqref{energies1}, respectively.  Let $u\in L^1(\Omega;\R^d)$ and let $A$ be an open subset of $\Omega$, and define $F'(u, A)$ and $G'(u,A)$ by \eqref{eq:Gammaliminf}. If $F'(u,A)<+\infty$, then $u\in GSBD^p(A)$ and
\begin{equation*}
\displaystyle F'(u,A)\geq  \alpha\int_A W(\mathcal{E}u)\,\mathrm{d}x + 2\beta \mathcal{H}^{d-1}(J_u\cap A)\,.
\end{equation*}
If it additionally holds $G'(u, A)<+\infty$, then
\begin{equation*}
\displaystyle G'(u,A)\geq  \alpha\int_A W(\mathcal{E}u)\,\mathrm{d}x + 2\beta \mathcal{H}^{d-1}(J_u\cap A)+\int_A \psi(|u|)\,\mathrm{d}x\,.
\end{equation*}
\end{prop}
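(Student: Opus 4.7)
The plan is to combine the bulk and surface lower bounds of Proposition~\ref{prop:boundsp} via Lemma~\ref{lem: lemmasup}, exploiting the mutual singularity of $\mathcal{L}^d$ and $\mathcal{H}^{d-1}\lfloor J_u$. By \eqref{eq:superadditive}, the set function $A\mapsto F'(u,A)$ is superadditive on disjoint open subsets; an identical verification applies to $G'(u,\cdot)$. Hence both functionals satisfy the hypothesis of Lemma~\ref{lem: lemmasup}.

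Fix a countable dense subset $\{\xi_h\}_{h\ge 1}\subset S^{d-1}$ and let $\lambda:=\mathcal{L}^d+\mathcal{H}^{d-1}\lfloor J_u$, which is a positive Borel measure on $\Omega$ since $\mathcal{H}^{d-1}(J_u)<+\infty$. I would introduce the countable family of Borel functions
\[
\varphi_0(x):=\alpha W(\mathcal{E}u(x))\,\chi_{\Omega\setminus J_u}(x),\qquad \varphi_h(x):=2\beta\,|\langle\nu_u(x),\xi_h\rangle|\,\chi_{J_u^{\xi_h}}(x),\quad h\ge 1\,.
\]
Then Proposition~\ref{prop:boundsp}(i) yields $F'(u,A)\ge\int_A\varphi_0\,\mathrm{d}\lambda$, and Proposition~\ref{prop:boundsp}(ii) yields $F'(u,A)\ge\int_A\varphi_h\,\mathrm{d}\lambda$ for every $h\ge 1$. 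Applying Lemma~\ref{lem: lemmasup} with $\varphi:=\sup_{h\ge 0}\varphi_h$ gives $F'(u,A)\ge\int_A\varphi\,\mathrm{d}\lambda$, and since $\mathcal{L}^d(J_u)=0$,
\[
\int_A\varphi\,\mathrm{d}\lambda=\alpha\int_A W(\mathcal{E}u)\,\mathrm{d}x+\int_{J_u\cap A}\Bigl(\sup_{h\ge 1}2\beta\,|\langle\nu_u,\xi_h\rangle|\,\chi_{J_u^{\xi_h}}\Bigr)\,\mathrm{d}\mathcal{H}^{d-1}\,.
\]

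To complete the first assertion, the inner supremum must equal $2\beta$ for $\mathcal{H}^{d-1}$-a.e.\ $x\in J_u$. For such $x$, the jump $u^+(x)-u^-(x)$ is nonzero by definition of the jump set, so the set $\{\xi\in S^{d-1}:\langle u^+(x)-u^-(x),\xi\rangle=0\}$ is a $(d-2)$-dimensional great subsphere. Hence every neighborhood of $\nu_u(x)$ in $S^{d-1}$ contains some $\xi_h$ with $\langle u^+(x)-u^-(x),\xi_h\rangle\ne 0$, meaning that $x\in J_u^{\xi_h}$ with $|\langle\nu_u(x),\xi_h\rangle|$ arbitrarily close to $1$, which proves the claim and thus the inequality for $F'$.

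The estimate for $G'$ follows by the same scheme upon replacing $\varphi_0$ with $\widetilde\varphi_0(x):=(\alpha W(\mathcal{E}u(x))+\psi(|u(x)|))\,\chi_{\Omega\setminus J_u}(x)$ and invoking Proposition~\ref{prop:boundsp}(iii) in lieu of (i). I expect the only genuinely delicate point of the argument to be the verification that $\sup_{h\ge 1}|\langle\nu_u,\xi_h\rangle|\,\chi_{J_u^{\xi_h}}=1$ $\mathcal{H}^{d-1}$-a.e.\ on $J_u$: this is immediate in the scalar $SBV$ setting, but in the vector-valued $GSBD$ context it requires the density argument above to handle the restriction to the subsets $J_u^{\xi_h}$.
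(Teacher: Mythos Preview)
Your proposal is correct and follows essentially the same approach as the paper: both arguments fix a countable dense family $\{\xi_h\}\subset S^{d-1}$, use the reference measure $\lambda=\mathcal{L}^d+\mathcal{H}^{d-1}\lfloor J_u$, invoke Proposition~\ref{prop:boundsp} to obtain $F'(u,A)\ge\int_A\varphi_h\,\mathrm{d}\lambda$ for each $h$, and then apply Lemma~\ref{lem: lemmasup}. The only notable difference is that you supply the density argument showing $\sup_{h\ge 1}|\langle\nu_u,\xi_h\rangle|\,\chi_{J_u^{\xi_h}}=1$ $\mathcal{H}^{d-1}$-a.e.\ on $J_u$, whereas the paper simply asserts this identity; your justification is correct and is indeed the point that deserves a word of explanation in the vector-valued setting.
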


\proof
We only prove the second inequality, which contains an additional term. Let $(\xi_h)_{h\geq1}$ be a dense sequence in $S^{d-1}$ and let $(\mu_h)_{h\geq0}$ be the sequence of bounded positive measures defined by
\begin{equation*}
\mu_0(A)=\int_{A} \left(\alpha W(\mathcal{E}u(x))+\psi(|u(x)|)\right)\,\mathrm{d}x\,,\,\,\, \mu_h(A)= 2\beta\int_{J_{u}\cap A}\phi^{\xi_h}(x)\,\mathrm{d}\mathcal{H}^{d-1}(x)\,,
\end{equation*}
where
\begin{equation*}
\phi^{\xi_h}(x)=
\begin{cases}
|\langle\nu_u(x),\xi_h\rangle|\,, & \mbox{ if }x\in J_u^{\xi_h}\cap A\,,\\
0\,, & \mbox{ otherwise in }J_u\cap A\,. 
\end{cases}
\end{equation*}
Let $\lambda$ be the bounded positive measure defined by
\begin{equation*}
\lambda(A):= \mathcal{L}^d(A)+\mathcal{H}^{d-1}(J_u\cap A)\,,
\end{equation*} 
and let $(\varphi_h)_{h\geq0}$ be the sequence of $\lambda$-measurable functions on $A$ defined as
\begin{equation*}
\varphi_0(x):=
\begin{cases}
\alpha W(\mathcal{E}u(x)) + \psi(|u(x)|)\,, & \mbox{ if }x\in A\backslash J_u\,,\\
0\,, & \mbox{ if }x\in A\cap J_u\,,
\end{cases}
\end{equation*}
\begin{equation*}
\varphi_h(x):=
\begin{cases}
0\,, & \mbox{ if }x\in A\backslash J_u\,,\\
2\beta\phi^{\xi_h}(x)\,, & \mbox{ if }x\in A\cap J_u\,.
\end{cases}
\end{equation*}
Then $\mu_h(A)=\int_A\varphi_h\mathrm{d}\lambda$ for every $h=0,1,\dots$.

Setting
\begin{equation*}
\varphi(x):=
\begin{cases}
\alpha W(\mathcal{E}u(x)) + \psi(|u(x)|)\,, & \mbox{ if }x\in A\backslash J_u\,,\\
2\beta\,, & \mbox{ if }x\in A\cap J_u\,,
\end{cases}
\end{equation*}
we have that $\sup_{h\geq0}\varphi_h(x)=\varphi(x)$ for $\lambda$-a.e. $x\in A$.

We now define $\mu(A):= G'(u,A)$. By virtue of Proposition~\ref{prop:boundsp} we have that
\begin{equation*}
\mu(A)\geq \mu_h(A)=\int_A\varphi_h\mathrm{d}\lambda
\end{equation*}
for every $h=0,1,\dots$. Since $\mu$ complies with \eqref{eq:superadditive}, as a consequence of Lemma~\ref{lem: lemmasup}, we get
\begin{equation*}
\begin{split}
G'(u,A)=\mu(A)&\geq \int_A\varphi\mathrm{d}\lambda \\
&= \alpha\int_A W(\mathcal{E}u)\,\mathrm{d}x + 2\beta \mathcal{H}^{d-1}(J_u\cap A) + \int_A \psi(|u|)\,\mathrm{d}x\,.
\end{split}
\end{equation*}
\endproof

\section{Estimate from above of the $\Gamma$\hbox{-}limit}\label{sec:upperbound}

{We denote by $F''$ and $G''$ the upper $\Gamma$\hbox{-}limits of $(F_\varepsilon)$ and $(G_\varepsilon)$, respectively, as defined in \eqref{eq:limsupchar}.} 

%

\begin{prop}\label{prop:upperbound}
Let $u\in GSBD^p(\Omega)\cap L^1(\Omega;\R^d)$. Then
\begin{equation}
F''(u)\leq \displaystyle \alpha \int_\Omega W(\mathcal{E}u)\,\mathrm{d}x + 2\beta \mathcal{H}^{d-1}(J_u)\,. 
\label{eq:upperbound}
\end{equation}
If, in addition, it holds that $\int_\Omega \psi(|u|)\,\mathrm{d}x<+\infty$, then 
\begin{equation}
G''(u)\leq \displaystyle \alpha \int_\Omega W(\mathcal{E}u)\,\mathrm{d}x + 2\beta \mathcal{H}^{d-1}(J_u) + \int_\Omega \psi(|u|)\,\mathrm{d}x\,.
\label{eq:upperboundG}
\end{equation}
\end{prop}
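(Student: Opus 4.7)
The plan is to first establish \eqref{eq:upperbound} for the dense subclass $\mathcal{W}(\Omega;\R^d)$ of piecewise smooth $SBV$-functions defined by \eqref{1412191008}, and then extend by a density argument. Combining Theorem~\ref{thm:density} with the Cortesani-Toader density of $\mathcal{W}(\Omega;\R^d)$ in $GSBD^p$, any $u\in GSBD^p(\Omega)\cap L^1(\Omega;\R^d)$ (with $\int\psi(|u|)\,\mathrm{d}x<+\infty$ in the $G$-case) is the $L^1$-limit of a sequence $(u_n)\subset \mathcal{W}(\Omega;\R^d)$ with $\mathcal{E}u_n\to\mathcal{E}u$ in $L^p$, $\mathcal{H}^{d-1}(J_{u_n}\triangle J_u)\to 0$, and $\int\psi(|u_n|)\,\mathrm{d}x\to \int\psi(|u|)\,\mathrm{d}x$. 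Together with the continuity of the right-hand side of \eqref{eq:upperbound} (and \eqref{eq:upperboundG}) along such a sequence and the standard lower semicontinuity of $F''$ and $G''$ under $L^1$-convergence, this reduces the proof to the case $u\in \mathcal{W}(\Omega;\R^d)$, where $K:=\overline{J}_u$ is a finite union of $(d{-}1)$-simplexes and $u\in W^{m,\infty}(\Omega\setminus K;\R^d)$ for every $m$.

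For such a $u$, I would pick $r_\e\to 0$ with $r_\e/\e\to 0$ and define a recovery sequence $u_\e$ that coincides with $u$ outside the tubular neighborhood $N_\e:=\{x\in\Omega:\mathrm{dist}(x,K)<r_\e\}$, and inside $N_\e$ affinely interpolates between the two traces $u^+,u^-$ across the smooth part of $K$ via the normal projection. Away from an $O(r_\e)$-neighborhood of the $(d{-}2)$-dimensional singular edges of the simplexes composing $K$, this yields $u_\e\in W^{1,\infty}(\Omega;\R^d)$ with $|\mathcal{E}u_\e|\le C/r_\e$ on $N_\e$ and $\mathcal{E}u_\e=\mathcal{E}u$ off $N_\e$; moreover $\|u_\e\|_\infty\le\|u\|_\infty$, and $u_\e\to u$ uniformly on compact subsets of $\Omega\setminus K$, so that in particular $u_\e\to u$ in $L^1(\Omega;\R^d)$.

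To bound $F_\e(u_\e)$, I would split $\Omega$ into $\Omega_\e^{\mathrm{out}}:=\{x:\mathrm{dist}(x,K)>\e+r_\e\}$, on which $B_\e(x)\cap N_\e=\emptyset$, and its complement $T_\e$. On $\Omega_\e^{\mathrm{out}}$ one has $u_\e=u$ on $B_\e(x)$, so $\e\dashint_{B_\e(x)\cap\Omega}W(\mathcal{E}u)\,\mathrm{d}y=O(\e)$ uniformly since $W(\mathcal{E}u)\in L^\infty$. Using $\lim_{t\to 0^+}f(t)/t=\alpha$, Fubini's theorem, and dominated convergence one then obtains
\[
\int_{\Omega_\e^{\mathrm{out}}}\tfrac{1}{\e}f\!\left(\e\dashint_{B_\e(x)\cap\Omega}W(\mathcal{E}u)\,\mathrm{d}y\right)\mathrm{d}x \ \longrightarrow\ \alpha\int_{\Omega}W(\mathcal{E}u)\,\mathrm{d}x.
\]
On $T_\e$ I would use the trivial bound $f\le \beta$ together with the Minkowski content formula for the polyhedral set $K$ (applicable since $r_\e/\e\to 0$) to deduce
\[
\int_{T_\e}\tfrac{1}{\e}f\!\left(\e\dashint_{B_\e(x)\cap\Omega}W(\mathcal{E}u_\e)\,\mathrm{d}y\right)\mathrm{d}x \ \le\ \tfrac{\beta}{\e}\mathcal{L}^d(T_\e)\ \longrightarrow\ 2\beta\,\mathcal{H}^{d-1}(K)=2\beta\,\mathcal{H}^{d-1}(J_u).
\]
Summing the two contributions gives \eqref{eq:upperbound}. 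For \eqref{eq:upperboundG}, since $\|u_\e\|_\infty\le\|u\|_\infty$ and $u_\e\to u$ a.e., dominated convergence yields $\int_\Omega\psi(|u_\e|)\,\mathrm{d}x\to\int_\Omega\psi(|u|)\,\mathrm{d}x$, so the fidelity term passes to the limit.

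The main technical obstacle is the regularization of the interpolant $u_\e$ near the $(d{-}2)$-dimensional singular edges of $K$, where the nearest-point projection is not single-valued. I would handle this by multiplying $u_\e$ by a smooth cutoff equal to $0$ on an $r_\e$-neighborhood of the edges and to $1$ outside a $2r_\e$-neighborhood: this modifies $u_\e$ only on a set of $d$-measure $O(r_\e^2)$, and its contribution to $F_\e$ is controlled by $(\beta/\e)\cdot O(\e^2\mathcal{H}^{d-2}(\mathrm{edges}))=O(\e)\to 0$, hence negligible. A minor additional care is needed to verify that this cutoff does not affect the key estimate $\|u_\e\|_\infty\le\|u\|_\infty$, which is however straightforward since the cutoff is a convex combination with the zero function.
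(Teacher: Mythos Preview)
Your argument is correct and follows the same overall scheme as the paper: reduce by density to $u\in\mathcal{W}(\Omega;\R^d)$, build a $W^{1,\infty}$ recovery sequence supported away from the jump, and split the energy into a bulk part (handled via $f(t)/t\to\alpha$ and dominated convergence) and a tubular part (handled via $f\le\beta$ and the Minkowski content of $K$).

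The difference lies only in the construction of the recovery sequence. You produce $u_\e$ by affine interpolation of the traces $u^\pm$ across each simplex of $K$, and then repair the construction near the $(d{-}2)$-dimensional skeleton with an extra cutoff. The paper instead makes one further reduction---assuming, by a localization/diagonal argument, that $K$ lies in a single coordinate hyperplane---and then simply takes $u_\e=u\,(1-\phi_\e)$ for a cutoff $\phi_\e$ supported in a $(\gamma_\e+\e)$-neighborhood of $K$ with $\gamma_\e/\e\to0$. This avoids any interpolation or edge surgery: there is no need to control $\mathcal{E}u_\e$ on the tube, since that region contributes only through the crude bound $f\le\beta$. Your route has the advantage of treating the full polyhedral jump set in one shot, without the extra hyperplane reduction; the paper's route is technically lighter, since ``multiply by a cutoff'' replaces the trace interpolation and makes the edge issue disappear. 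Either way the limiting estimates for the bulk and surface contributions are identical.
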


\proof
We only prove \eqref{eq:upperbound} by using the density result of Theorem~\ref{thm:density}, as \eqref{eq:upperboundG} follows by an analogous construction with the additional property \eqref{eq:densityfidel}. 

In view of Theorem~\ref{thm:density} and remarks below, by a diagonal argument it is not restrictive to assume that $u\in \mathcal{W}(\Omega;\Rd)$ and that $J_u$ is a closed subset of any of the coordinate hyperplanes, 
that we denote by $K$.

Let  $K_h:=\{x\in\R^d:\, {\rm dist}(x,K)< h\}$ for every $h>0$, and let $\gamma_\epsilon>0$ be a sequence such that $\gamma_\epsilon/\epsilon\to0$ as $\epsilon\to0$. Notice that, for $\varepsilon$ small,
\begin{equation*}
K \subset K_{\gamma_\varepsilon}\subset\subset K_{\gamma_\varepsilon+\varepsilon}\subset\subset \Omega\,,
\end{equation*} 
recalling that $K \subset \Omega$.
Let $\phi_\epsilon$ be a smooth cut-off function between $ K_{\gamma_\varepsilon}$ and $K_{\gamma_\varepsilon+\varepsilon}$, and set
\begin{equation*}
u_\epsilon(x):=u(x)(1-\phi_\epsilon(x))\,.
\end{equation*}
Since $u\in W^{1,\infty}({\Omega}\backslash J_u;\R^d)$ we have $u_\epsilon\in W^{1,\infty}({\Omega};\R^d)$. Note also that, by the Lebesgue Dominated Convergence Theorem, $u_\epsilon\to u$ in $L^1(\Omega;\mathbb{R}^d)$. Moreover, since $u_\epsilon=u$ on $B_\epsilon(x)\cap\Omega$ if $x\not\in K_{\gamma_\varepsilon+\varepsilon}$, we have
\begin{equation}
F_\epsilon(u_\epsilon)\leq \displaystyle\frac{1}{\varepsilon}\int_\Omega f\left(\varepsilon \dashint_{B_\varepsilon(x)\cap\Omega}W(\mathcal{E}u(y))\,\mathrm{d}y\right)\,\mathrm{d}x + \beta \, \frac{\mathcal{L}^d(K_{\gamma_\varepsilon+\varepsilon})}{\epsilon}\,.
\label{stimaint}
\end{equation}
Setting
\begin{equation*}
w_\epsilon(x):= \dashint_{B_\varepsilon(x)\cap\Omega}W(\mathcal{E}u(y))\,\mathrm{d}y\,,
\end{equation*}
we have that $w_\epsilon(x)$ converges to $w(x):=W(\mathcal{E}u(x))$ in $L^1_{\rm loc}(\Omega)$ as $\varepsilon\to0$. Since $f$ complies with \eqref{eq: alfabeta} and it is increasing, there exists $\tilde{\alpha}>\alpha$ such that $f(t)\leq \tilde{\alpha} t$ for every $t\geq0$.
This gives
\begin{equation*}
\frac{1}{\epsilon}f(\epsilon w_\epsilon(x))\leq \tilde{\alpha}w_\epsilon(x) \quad \mbox{ for every }x\in\Omega \mbox{ and every }\epsilon>0\,,
\end{equation*}
and, taking into account that $\displaystyle\lim_{t\to0^+}\frac{f(t)}{t}=\alpha$, we also infer that
\begin{equation*}
\frac{1}{\epsilon}f(\epsilon w_\epsilon(x))\to \alpha w(x) \quad \mbox{ for a.e. }x\in\Omega\,.
\end{equation*}
Thus, by Lebesgue's Dominated Convergence Theorem,
\begin{equation*}
\lim_{\epsilon\to0} \displaystyle\frac{1}{\varepsilon}\int_\Omega f\left(\varepsilon \dashint_{B_\varepsilon(x)\cap\Omega}W(\mathcal{E}u(y))\,\mathrm{d}y\right)\,\mathrm{d}x = \displaystyle \alpha \int_\Omega W(\mathcal{E}u)\,\mathrm{d}x\,.
\end{equation*}
Noting that
\begin{equation*}
\lim_{\epsilon\to0} \beta\, \frac{\mathcal{L}^d(K_{\gamma_\varepsilon+\varepsilon})}{\epsilon} = \lim_{\epsilon\to0} \beta\, \frac{\mathcal{L}^d(K_{\gamma_\varepsilon+\varepsilon})}{2(\gamma_\epsilon+\epsilon)}\frac{2(\gamma_\epsilon+\epsilon)}{\epsilon} = 2\beta\,\mathcal{H}^{d-1}(J_u)\,,
\end{equation*}
from \eqref{stimaint}, the subadditivity of the limsup and \eqref{eq:limsupchar} we get \eqref{eq:upperbound}.
\endproof

\begin{proof}[\it Proof of Theorems \ref{thm:mainresult} and \ref{thm:mainresult2}]
The two results  follow by combining Propositions \ref{prop:compactness}, \ref{prop:lowerbound}, and \ref{prop:upperbound}
\end{proof}

\section*{Acknowledgements}
The authors have been supported by the Italian Ministry of Education, University and Research through the Project “Variational methods for stationary and evolution problems with singularities and interfaces” (PRIN 2017). The authors gratefully acknowledge the anonymous referee for a careful reading of the paper and for her/his interesting remarks leading to improvements of the manuscript.

\bibliographystyle{siam}

\bibliography{references}

\end{document}